\numberwithin{equation}{section}
\begin{document}
\title{On growth and torsion of groups}
\author{Laurent Bartholdi and Floriane Pochon}
\date{June 5th, 2007}
\begin{abstract}
We give a subexponential upper bound and a superpolynomial lower bound
on the growth function of the Fabrykowski-Gupta group.

As a consequence, we answer negatively a question by Longobardi, Maj
and Rhemtulla~\cite{rhemtulla} about characterizing groups containing
no free subsemigroups on two generators.
\end{abstract}

\maketitle

\section{Introduction}
Fabrykowski and Gupta constructed in 1985 a group of intermediate word
growth, producing in this way a new example after Grigorchuk's
original construction~\cite{grigorchuk-1}.

This group appears originally in~\cite{gupta-fabrykowski1}, and is
studied further in~\cite{gupta-fabrykowski2}; some of its algebraic
properties are explained in~\cite{bartho-grigo}.  A proof of its
intermediate growth was first given in~\cite{gupta-fabrykowski1}, with
an explicit upper bound. However, a gap in the argument lead to a
second proof, in~\cite{gupta-fabrykowski2}, this time with no upper
bound.

Although that second paper's general strategy is sound, many details
are missing or incorrect, and we hope to present here the first
complete proof. We also give explicit upper and lower bounds on the
growth function.

Let us say that two functions $f,g$ satisfy the relation $f\lesssim g$
if there is a constant $A>0$ such that $f(n)\le g(An)$. We prove the
\begin{theorem}\label{growth}
  The growth of the Fabrykowski-Gupta group is intermediate. More
  precisely, if $\gamma(n)$ denote the number of elements expressible
  as a product of at most $n$ generators of the Fabrykowski-Gupta
  group, then
$$
e^{n^\frac{\log 3}{\log 6}}\lesssim \gamma(n) \lesssim e^{\frac{n(\log\log n)^2}{\log n}}.
$$
\end{theorem}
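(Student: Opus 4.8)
The plan is to exploit the self-similar structure of the Fabrykowski--Gupta group $G=\langle a,b\rangle$ acting on the rooted ternary tree, where $a$ cyclically permutes the three subtrees and $b=(a,1,b)$ in the usual wreath recursion. Writing $\psi$ for the embedding of the first-level stabilizer $\mathrm{St}(1)$ into $G\times G\times G$ given by taking sections, I would treat the two bounds independently: the lower bound by inflating triples of elements through $\psi^{-1}$, and the upper bound by iterating $\psi$ and counting. Throughout, the relations $a^3=b^3=1$ force every reduced word to alternate between $\{a,a^{-1}\}$ and $\{b,b^{-1}\}$, a fact I would use constantly.

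For the lower bound I would first record that $\psi$ is almost length-preserving: since $a$ has trivial sections and each $b$ contributes exactly one $a$ and one $b$ to the sections, a reduced word of length $n$ produces sections of total length at most $n+O(1)$. Dually, lifting is cheap, and I would exhibit explicit witnesses, e.g.\ $\psi([b,aba^{-1}])=([a,b],1,1)$, and more generally show that a finite-index regular branch subgroup $K$ (containing $[a,b]$) satisfies $\psi(K)\supseteq K\times K\times K$ with a lifting estimate $|\psi^{-1}(g_0,g_1,g_2)|\le 2(|g_0|+|g_1|+|g_2|)+C$, the constant $2$ coming from a direct computation with the branch generators. Since $K$ has finite index, balls of $K$ and of $G$ have comparable cardinality, so the injection $(g_0,g_1,g_2)\mapsto\psi^{-1}(g_0,g_1,g_2)$ gives $\gamma(n)^3\le\gamma(6n+C)$. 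Iterating yields $\log\gamma(6^k)\ge c\,3^k$, that is $e^{n^{\log 3/\log 6}}\lesssim\gamma(n)$, the claimed superpolynomial lower bound.

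For the upper bound the same near-isometry of $\psi$ is now the enemy: because the total section length is only $\approx n$, there is no constant-factor contraction, and this is exactly why $G$ sits so close to exponential growth. The genuine savings come from the group relations inside a single section, where two $b$-contributions that become adjacent in one coordinate collapse via $b^2=b^{-1}$, which happens precisely when the intervening $b$'s of the word all sit at the same cyclic position. I would therefore prove a refined contraction showing that such collapses cannot be avoided at a controlled rate, so that descending to level $d$ the total length of the $3^d$ sections drops substantially below $n$. One then reconstructs an arbitrary $g$ of length $\le n$ from its depth-$d$ portrait together with its depth-$d$ sections, bounds the number of portraits and the number of admissible short sections (recursively, using the same growth estimate), and optimizes the depth, which balances at $3^d\asymp n/\log n$, i.e.\ sections of length $\asymp\log n$. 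Carrying the $\mathrm{polylog}$ factors through the optimization produces the stated bound $\gamma(n)\lesssim e^{n(\log\log n)^2/\log n}$.

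The main obstacle is precisely this refined contraction estimate for the upper bound. Establishing that the order-three relations force enough cancellation, uniformly over all geodesics and at every level, is delicate: one must control the joint distribution of the cyclic positions of the $b$-letters along a geodesic and show it cannot conspire to suppress the collapses, and then track the surviving length through the $d$-fold recursion while keeping the bookkeeping of portraits and short sections tight enough that the exponent $(\log\log n)^2/\log n$, rather than a cruder $e^{n^\alpha}$ or $e^{n/\log n}$, actually emerges. Reconciling the additive and multiplicative constants across the iteration, so that they are absorbed cleanly by the relation $\lesssim$, is the remaining technical point.
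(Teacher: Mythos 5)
Your lower bound is essentially the paper's argument and is sound: the branch structure gives an injection of a triple of balls (intersected with a finite-index branch subgroup) into a ball of radius $6n+O(1)$, whence $\gamma(n)^3\le\gamma(6n+C)$ and, by iteration, $e^{n^{\log 3/\log 6}}\lesssim\gamma(n)$. The paper implements this with the endomorphism of $G'$ induced by $a\mapsto t$, $t\mapsto t^a$, which produces the factor $6$ directly, but the mechanism is the same as yours.

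The upper bound is where your proposal has a genuine gap, and the missing step is conceptual rather than technical. You propose to ``prove a refined contraction showing that such collapses cannot be avoided at a controlled rate,'' uniformly over all geodesics and all levels. No such uniform contraction exists: the paper's Lemma~\ref{mot sans red} and Lemma~\ref{lemma:cara I} exhibit an infinite set $\mathcal{I}$, containing elements of arbitrarily large length (index sequences in $\mathcal{S}$, exponent sequences in $\partial\mathcal{S}$), which admit \emph{no} length reduction at \emph{any} level of the tree. Any argument that tries to force a definite proportion of cancellation on every element therefore fails, which is precisely why Proposition~\ref{propo: grigo} is not applicable here. The paper's resolution is the dichotomy of Proposition~\ref{propo : growth}: write each $g$ as a product of $N(g)$ elements of $\mathcal{I}$ with $N(g)$ minimal; if $N(g)\le\lambda$ one counts such products directly using the fact (Proposition~\ref{structure I}) that $\mathcal{I}=\mathcal{I}_3$ has \emph{linear} growth, i.e.\ spheres of bounded cardinality; if $N(g)>\lambda$ one shows the length drops by at least $\lambda$ at a fixed level $m$, and the choice $\lambda=n\log\log n/\log n$ yields the exponent $n(\log\log n)^2/\log n$ via Lemma~\ref{lemme lf}. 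Two further ingredients you would need and do not have are: (i) the weighted metric $\ell(a^{\pm1})=0$, $\ell(t^{\pm1})=1$, without which the inequality $\sum_i\ell(g_i)\le\ell(g)$ fails and the whole reduction calculus (and the definition of $\mathcal{I}$) breaks down --- your observation that sections have total length ``at most $n+O(1)$'' in the standard metric is exactly the obstruction this weighting removes; and (ii) the combinatorial classification of non-reducing words needed to prove that $\mathcal{I}$ is as small as claimed. As written, your proposal correctly isolates the hard point but does not supply the idea that overcomes it.
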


We then apply this result to a question by Longobardi, Maj and
Rhemtulla.  Let $G$ be a group with an exact sequence $1\to N\to G\to
P\to 1$, where $N$ is locally nilpotent and $P$ is periodic. Then $G$
has no free subsemigroup. Indeed, let $x,y\in G$. Then $x^n,y^n\in N$
for some $n$ large enough, so that $\langle x^n,y^n\rangle$ is
nilpotent. Hence, neither $\langle x^n,y^n\rangle$ nor $\langle
x,y\rangle$ are free as semigroups. (Note that, without loss of
generality, one may assume that $G$ is finitely generated).

In~\cite{rhemtulla}, Longobardi, Maj and Rhemtulla asked whether the
converse were true:
\begin{question}\label{qu:1}
  Let $G$ be a finitely generated group with no free subsemigroups. Is
  $G$ a periodic extension of a locally nilpotent group?
\end{question}

The answer turns out to be negative; indeed, Ol'shanskii and Storozhev
construct in~\cite{olshanskii} a semigroup identity whose free group
is not even a periodic extension of a locally soluble group.

We remark that a very simple answer can be given to
Question~\ref{qu:1}, knowing that the Fabrykowski-Gupta group has
intermediate growth:
\begin{theorem}
  The Fabrykowski-Gupta group is generated by two elements, contains
  no free subsemigroup, and is not a periodic extension of a locally
  nilpotent group.
\end{theorem}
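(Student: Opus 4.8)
The plan is to establish the three asserted properties in turn. The first, that the Fabrykowski-Gupta group $G$ is generated by two elements, is part of its definition as a self-similar group acting on the ternary rooted tree; I would simply recall the standard description of $G=\langle a,t\rangle$, where $a$ is the rooted automorphism cyclically permuting the three subtrees at the root and $t$ is the directed automorphism satisfying a wreath recursion of the form $t=(a,1,t)$ (up to conventions). This is immediate and requires no real work.

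For the second property, that $G$ contains no free subsemigroup on two generators, the key idea is to invoke Theorem~\ref{growth}: a group of subexponential word growth cannot contain a free subsemigroup on two generators. Indeed, if $x,y$ were free generators of a subsemigroup, then the $2^k$ distinct positive words of length $k$ in $x,y$ would all be distinct elements of $G$, each expressible as a product of at most $Ck$ generators (where $C$ bounds the generator-length of $x$ and $y$); this would force $\gamma(Ck)\ge 2^k$, contradicting the subexponential upper bound $\gamma(n)\lesssim e^{n(\log\log n)^2/\log n}$. I would present this as a short counting argument, emphasizing that subexponential growth is precisely what rules out free subsemigroups.

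For the third property, that $G$ is not a periodic extension of a locally nilpotent group, I would argue by contradiction using the superpolynomial lower bound. Suppose $1\to N\to G\to P\to 1$ with $N$ locally nilpotent and $P$ periodic. The plan is to show this forces polynomial (or at worst some controlled) growth, contradicting $e^{n^{\log 3/\log 6}}\lesssim\gamma(n)$. The cleanest route is to recall that $G$ is an infinite finitely generated torsion group (being a $3$-group in the standard branch-group sense), so any periodic quotient business interacts with the structure of $N$: since $G$ is itself periodic, the quotient $P$ plays no essential role and one reduces to asking whether $G$ itself could be locally nilpotent. But a finitely generated nilpotent group has polynomial growth by Bass–Guivarc'h, and a locally nilpotent torsion group that is finitely generated is finite; either way the growth would be bounded well below the superpolynomial lower bound, a contradiction.

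The main obstacle I anticipate is the third part, and specifically justifying the reduction cleanly. The subtlety is that $P$ periodic and $N$ locally nilpotent does not immediately make $G$ locally nilpotent, so one must exploit that $G$ is \emph{itself} torsion to collapse the extension: because every element of $G$ has finite order, the finitely generated subgroups one tests for nilpotency already live inside $N$ after passing to suitable powers, and finite generation of $G$ combined with local nilpotence of $N$ would yield a finitely generated nilpotent (hence finite, being torsion) group, capping the growth. Making this airtight — rather than merely plausible — is where the care is needed; the superpolynomial lower bound of Theorem~\ref{growth} then delivers the contradiction and simultaneously demonstrates, via the genuinely subexponential-yet-superpolynomial growth, that $G$ is the desired counterexample to Question~\ref{qu:1}.
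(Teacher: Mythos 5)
Your first two parts are fine: two-generation is definitional, and your counting argument that subexponential growth excludes free subsemigroups on two generators is correct (this is indeed the only role the upper bound of Theorem~\ref{growth} plays here).

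The third part contains a fatal error. You assert that $G$ is ``an infinite finitely generated torsion group (being a $3$-group in the standard branch-group sense)'' and build the whole reduction on that. But the Fabrykowski--Gupta group is \emph{not} a torsion group: the element $at$ has infinite order, and the paper's own proof uses exactly this fact. Worse, if $G$ were periodic the third assertion of the theorem would simply be \emph{false}: one could take $N=1$ and $P=G$ in the sequence $1\to N\to G\to P\to 1$, exhibiting $G$ as a periodic extension of the (trivially locally nilpotent) trivial group. So the non-periodicity of $G$ is not a technicality you can sidestep; it is the very reason the statement holds, and your proposed collapse of the extension (``the quotient $P$ plays no essential role'') fails at the first step. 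The correct argument runs the other way: given $1\to N\to G\to P\to 1$ with $P$ periodic, non-periodicity of $G$ forces $N\neq 1$; since $G$ is just infinite (recalled in Section~\ref{gf}), $N$ has finite index, hence is finitely generated and has the same intermediate growth type as $G$; but a finitely generated locally nilpotent group is nilpotent and so has polynomial growth, a contradiction. Just-infiniteness is the ingredient you are missing --- without it there is no way to promote ``$N\neq 1$'' to ``$N$ is finitely generated of intermediate growth.''
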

\begin{proof}
  Consider a short exact sequence $1\to N\to G\to P \to 1$, with $P$
  periodic. Since $G$ is not periodic (as it contains the element $at$
  of infinite order), we have $N\not= 1$. Since $G$ is just infinite,
  $N$ is of finite index in $G$, and hence, $N$ is finitely generated.
  Therefore, as $G$ has intermediate growth, so does $N$.  In
  particular, $N$ is not (locally) nilpotent.
\end{proof}
This example is quite different from the Ol'shanskii-Storozhev
example: it is a concrete, residually-$3$ group which does not satisfy
any identity.

\section{Settings}\label{gf}

\subsection{The Fabrykowski-Gupta group}

Consider the cyclic group of order three
$A=\mathbb{Z}/3\mathbb{Z}=\{0,1,2\}$ with generator $a$, and the
3-regular rooted tree $\mathcal{T}_3=A^*$, with root $\emptyset$.  The
automorphism group of $A^*$ is recursively defined by
$Aut(A^*)=Aut(A^*)\wr Sym(A)$, and every automorphism decomposes via
the map
$$
\phi\colon Aut(A^*)\to Aut(A^*)\wr Sym(A) ; \: f\mapsto \pl f_0,f_1,f_2\pr \sigma
$$
where $f_i\in A^*$ and $\sigma\in Sym(A)$.  Thus, $a$ acts on
$\mathcal{T}_3$ as a cyclic permutation of the first level $A$ of the
tree. Define the automorphism $t$ recursively by $t=\pl a,1,t\pr$.
Note that both $a$ and $t$ are of order 3.  The group $G$ generated by
$a$ and $t$ is called the Fabrykowski-Gupta group. It is known to be a
just infinite group, regular branched over $G'$ (see~\cite{bartho-grigo}).

We still call $\phi$ the decomposition map of $G$
$$
\phi\colon G\hookrightarrow G\wr A ; \: g\mapsto\pl g_i\pr_{i\in A}\, \sigma.
$$

Let $Stab(n)$ be the subgroup of $G$ that stabilizes the $n^{th}$
level of $A^*$. Then $G=Stab(1)\rtimes \langle a\rangle$. Furthermore,
$Stab(1)=\langle t\rangle^{\langle a\rangle}$.  Let $t_0=t=\pl
a,1,t\pr$, $t_1=t^a=\pl t,a,1\pr$ and $t_2=t^{a^2}=\pl 1,t,a\pr$ be
the generators of $Stab(1)$. Every word $w=w(a^{\pm 1},t^{\pm 1})$
uniquely decomposes as
\begin{align}\label{word in G}
w=t_{c_1}^{\gamma_1}\, t_{c_2}^{\gamma_2}\, \cdots\, t_{c_n}^{\gamma_n} \,\tau, \text{ with } \gamma_i\in A-\{0\},\; c_i\in A , \; c_i\not= c_{i+1}, \text{ and }\tau\in A,
\end{align}
so that the decomposition map $\phi$ is defined without ambiguity on
the set $W$ of all such words.

We define a word metric on $G$ by assigning the following weights on
the generators of $G$ : $\ell(t^{\pm 1})=1$ and $\ell(a^{\pm 1})=0$.
Then the length of a word $w\in W$, decomposed as in (\ref{word in G})
is $\ell(w)=n$. That is, the length of $w$ is the number of letters
``$t^{\pm 1}$" that appear in $w$.  The induced metric on $G$ is
$$
\ell(g) =\min\{\ell(w)| w=_G g\},
$$
for every $\gamma\in G$. We then define a minimal-length normal form
$G\to W; g\mapsto w$ on $G$.

Note that $\sum_{i\in A}\ell(g_i)\le \ell(g)$ for every $g\in G$. We
will say that $g\in G$ admits \emph{length reduction} if there is a
$d$ such that
$$
\sum_{i\in A^d} \ell(g_i)< \ell(g),
$$
where the $g_i$'s are the states of $g$ on the $d^{th}$ level of the
tree (\emph{i.e.}, the components of $\phi^d(g)$).

\section{Subexponential growth of fractal groups}


A ``traditional'' way (introduced by Grigorchuk in~\cite{grigorchuk})
to prove that a fractal group $G$ has subexponential growth is to show
that every group element admits a fixed proportion of length
reduction. More explicitly,
\begin{proposition}\cite{bux}\label{propo: grigo}
  Let $G$ be a fractal group acting on a d-regular tree, with a word
  metric $\ell$. If there exist constants $0 \le \eta <1$ and $k\ge 0$
  such that, for the natural embedding $\phi\colon
  Stab(1)\hookrightarrow G^{d} : g\mapsto \pl g_1, \ldots, g_{d}\pr$,
  $$
  \sum_{i=1}^{d} \ell(g_i)\le \eta \ell(g) +k
  $$
  for every $g\in Stab(1)$, then $G$ has subexponential growth.
\end{proposition}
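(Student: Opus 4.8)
The plan is to bound the growth function $\gamma_G$ of $G$ by controlling how the number of elements of length at most $n$ grows, exploiting the self-similar embedding $\phi\colon Stab(1)\hookrightarrow G^d$. The governing intuition is that the contraction hypothesis $\sum_i \ell(g_i)\le \eta\ell(g)+k$ means that elements of length $n$ decompose into $d$-tuples whose total length is essentially $\eta n$, so iterating the decomposition rapidly shrinks the length and the number of elements cannot grow exponentially.

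First I would set up the counting estimate. Let $\beta(n)=\#\{g\in G : \ell(g)\le n\}$. Since $G=Stab(1)\rtimes\langle a\rangle$ and $\langle a\rangle$ is finite, it suffices to count elements of $Stab(1)$ up to the bounded factor $|A|=d$. For $g\in Stab(1)$, the element is determined by the tuple $\phi(g)=(g_1,\dots,g_d)$ together with the finite permutation data at the root, so the number of elements of length $\le n$ is at most (a constant times) the number of $d$-tuples $(g_1,\dots,g_d)$ with $\sum_i\ell(g_i)\le \eta n+k$. This yields the recursive inequality
\begin{equation}\label{eq:recursion}
\beta(n)\le C\sum_{n_1+\cdots+n_d\le \eta n+k}\prod_{i=1}^d \beta(n_i),
\end{equation}
for a constant $C$ absorbing the root data. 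The key point is that the sum is over tuples whose total length is a fixed fraction smaller than $n$.

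Next I would convert \eqref{eq:recursion} into a subexponential bound by taking logarithms and studying the growth rate. Writing $\lambda=\limsup_{n\to\infty}\frac{1}{n}\log\beta(n)$, one shows that the recursion forces $\lambda$ to satisfy $\lambda\le \eta\lambda$, and since $\eta<1$ this gives $\lambda=0$, i.e. $\log\beta(n)=o(n)$, which is exactly subexponential growth. Concretely I would bound $\log\beta(n)$ by splitting the number of tuples with fixed total length $m\le \eta n+k$ into a combinatorial factor (counting compositions of $m$, which contributes only polynomially or at most $e^{O(\sqrt{n})}$-type terms) times $\max \prod\beta(n_i)$, and then feed this into an inductive estimate of the form $\log\beta(n)\le f(n)$ with $f$ chosen so that the contraction in the argument of $f$ dominates the combinatorial overhead.

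The main obstacle I anticipate is controlling the combinatorial prefactor $C$ and the number of compositions of $m$ into $d$ parts, and making sure these subexponential corrections do not accumulate destructively across the $\log_d n$-many levels of iteration. The clean way to handle this is to not iterate the bare recursion but to choose a trial function $f$ (for instance $f(n)=c\,n^\alpha$ with $\alpha<1$ depending on $\eta$ and $d$) and verify by induction that \eqref{eq:recursion} propagates the bound $\log\beta(n)\le f(n)$: the contraction ensures $d\cdot f(\eta n/d)$ or the appropriate term stays below $f(n)$ for large $n$ once $\alpha$ is tuned, while the polynomially-many compositions and the constant $C$ are absorbed into lower-order terms. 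Establishing that such an $\alpha$ exists, and that the induction closes for all sufficiently large $n$ after handling finitely many base cases, is where the real work lies; everything else is bookkeeping around the self-similar structure guaranteed by $G$ being fractal.
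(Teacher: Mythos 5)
The paper offers no proof of this proposition --- it is quoted from~\cite{bux} --- so there is nothing internal to compare against; your argument must stand on its own, and it essentially does: it is the standard contraction argument. The recursion $\beta(n)\le C\sum_{n_1+\cdots+n_d\le\eta n+k}\prod_{i=1}^d\beta(n_i)$ is valid because $\phi$ is injective on $Stab(1)$ (faithful action) and $Stab(1)$ has finite index, the number of compositions of an integer $\le\eta n+k$ into $d$ parts is polynomial in $n$, and inserting $\beta(m)\le C_\epsilon e^{(\lambda+\epsilon)m}$ (valid for all $m$ once the finitely many small $m$ are absorbed into $C_\epsilon$) yields $\lambda\le\eta(\lambda+\epsilon)$ for every $\epsilon>0$, hence $\lambda=0$. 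Two points deserve explicit mention. First, you must record that $\lambda=\limsup\frac1n\log\beta(n)$ is finite (finite generation gives at most exponential growth), since $\lambda\le\eta\lambda$ is vacuous for $\lambda=\infty$. Second, the closing paragraph's trial-function induction is an optional strengthening, not a necessity: the $\limsup$ computation already proves subexponentiality. If you do want the explicit bound $\log\beta(n)\le cn^\alpha$, the right exponent is $\alpha=\log d/\log(d/\eta)$, because concavity of $x\mapsto x^\alpha$ puts the worst case of $\sum_i n_i^\alpha$ under $\sum_i n_i\le\eta n$ at $n_i=\eta n/d$, which forces the condition $d^{1-\alpha}\eta^\alpha\le1$; the polynomial prefactors are then absorbed in the slack of order $n^\alpha$. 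With those caveats the proof is complete in outline.
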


\subsection{Length reduction and subexponential growth}
Let $G$ be a finitely generated fractal group acting on a $d$-regular
rooted tree, and let $\ell$ be a proper seminorm on $G$. Suppose that
for every $g=\pl g_1,\ldots ,g_d\pr \sigma$ in $G$, we have
$\sum_{i=1}^d \ell(g_i)\le \ell(g)$.

Let $\mathcal{I}_n$ be the subset of $G$ of elements that have no
length reduction up to the $n^{th}$ level of the tree. It is defined
recursively by $\mathcal{I}_0=G$ and
$$
\mathcal{I}_n=\left\lbrace g\in G\;|\; \sum_{i=1}^d \ell(g_i)= \ell(g)
  \text{ and } g_i\in I_{n-1}\text { for every } 1\le i\le
  d\right\rbrace.
$$
Then, $\mathcal{I}:=\bigcap_{n\ge 0} \mathcal{I}_n$ is the set of
words that have no length reduction on any level of the tree.

\begin{proposition}\label{propo : growth}
  Let $G=\langle X\rangle$ be a group as above, with $X$ finite, and
  $X\subset \mathcal{I}$.  If there exists some $k$ such that
  $\mathcal{I}_k$ has subexponential growth, then $G$ has
  subexponential growth.

  Moreover, if $\mathcal{I}_k$ has linear growth, then the growth of
  $G$ is bounded in the following way:
  $$
  \gamma(n) \lesssim e^{n\frac{(\log\log n)^2}{log n}},
  $$
  where $\gamma(n)=\#\{g\in G | \ell(g)\le n\}$. 
\end{proposition}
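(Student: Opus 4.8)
The plan is to count the elements of $G$ of length at most $n$ through the self-similar embedding, paying for the count of their sections in two currencies: the permutations recorded along the tree, and the states sitting at a carefully chosen level $N$. First I would fix a level $N$ (to be optimised at the end) and use that $\phi^N$ embeds $Stab(N)$ into $G^{d^N}$, while $Stab(N)$ has finite index in $G$. Every $g$ is then determined by the portrait $\rho$ of the permutations it induces on the first $N$ levels of the tree together with its tuple of states $(g_w)_{w\in A^N}=\phi^N(g)$. The portrait ranges over a set of size $|Sym(A)|^{(d^N-1)/(d-1)}$, so
$$
\gamma(n)\ \lesssim\ |Sym(A)|^{(d^N-1)/(d-1)}\cdot \#\Big\{(g_w)_{w\in A^N}\ :\ \sum_{w\in A^N}\ell(g_w)\le n\Big\}.
$$
The first factor is doubly exponential in $N$, so the whole strategy is to descend far enough that the state tuples become cheap to count, yet not so far that this portrait factor dominates.

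The heart of the matter is to bound the number of admissible state tuples using the hypothesis on $\mathcal{I}_k$. The point is the dichotomy built into the definition of the sets $\mathcal{I}_n$: descending one block of $k$ levels, each state either lies in $\mathcal{I}_k$, in which case its length is exactly conserved and merely redistributed among its $d^k$ children, or it admits a genuine reduction, in which case the total length drops strictly, hence—for the integer-valued length at hand—by at least one. Since the total length starts at $n$ and never increases, the reductions are a budgeted resource: at most $n$ of them can occur along the whole descent. Consequently the level-$N$ tuple is assembled from a controlled number of \emph{stubborn} pieces lying in $\mathcal{I}_k$, whose number for a given length budget is governed by the growth of $\mathcal{I}_k$. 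When $\mathcal{I}_k$ has linear growth, a stubborn piece of length $m$ contributes only $O(m)$ choices, so the state-tuple count is bounded by an expression of the shape $(\text{const}\cdot n)^{O(d^N)}$, up to combinatorial factors recording how the length $n$ and the reductions are spread over the tree.

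Finally I would optimise $N$. On a logarithmic scale the portrait factor contributes about $d^{N}\log|Sym(A)|$, while the state-tuple count contributes, up to the distribution factors, roughly $d^{N}\log n$; thus $\log\gamma(n)$ is controlled by $d^{N}\log n$, and the requirement that the $\mathcal{I}_k$-accounting be valid bounds $N$ from below. Choosing $N$ comparable to $\log_d n$, with a lower-order correction of size $\log_d\log n$ that balances the two regimes, and tracking the distribution factors carefully, is what produces the exponent $n(\log\log n)^2/\log n$. For the first, purely qualitative, statement I would run the same scheme with the weaker hypothesis: if $\mathcal{I}_k$ only has subexponential growth, the state-tuple count is already subexponential in $n$, and letting $N=N(n)$ tend to infinity slowly enough that $|Sym(A)|^{(d^N-1)/(d-1)}$ stays subexponential forces $\log\gamma(n)=o(n)$, i.e.\ subexponential growth of $G$.

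The main obstacle, I expect, is the middle step: turning the reduction-versus-$\mathcal{I}_k$ dichotomy into an honest bound on the number of realisable state tuples. One must simultaneously keep track of how many branches of the descent are still active, how the length budget $n$ is partitioned among the stubborn $\mathcal{I}_k$-pieces, and the realisability constraints that tie those pieces to the recorded permutations; it is in reconciling these, and in bookkeeping the block length $k$ against the descent depth $N$, that the precise combinatorial factors—and hence the exponent $(\log\log n)^2/\log n$—must be extracted.
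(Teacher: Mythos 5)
Your scheme is not the one the paper uses, and it has a genuine gap at exactly the step you flag as the main obstacle. The level-$N$ states of $g$ are arbitrary group elements whose lengths sum to at most $n$; they are \emph{not} ``assembled from stubborn pieces lying in $\mathcal{I}_k$''. An element outside $\mathcal{I}_k$ does not decompose, via the tree structure, into sections belonging to $\mathcal{I}_k$: its sections are simply other group elements, possibly again outside $\mathcal{I}_k$, and the only information the dichotomy gives is that the total length drops by at least one per offending block. So to count the level-$N$ tuples you must either (i) count arbitrary elements of prescribed lengths, which is $\gamma$ again and is circular unless you set up an induction on $n$ (which you do not), or (ii) descend until every surviving state lies in $\mathcal{I}_k$, which for a single branch may take a number of blocks comparable to $n$, at which point the portrait factor $|Sym(A)|^{(d^N-1)/(d-1)}$ is doubly exponential in $n$. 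Your claimed bound $(\mathrm{const}\cdot n)^{O(d^N)}$ on the state-tuple count therefore has no justification. A symptom that the accounting is off even heuristically: if that bound were correct, taking $N=\log_d n-2\log_d\log n$ would give $\log\gamma(n)\lesssim d^N\log n\approx n/\log n$, strictly better than the stated exponent $n(\log\log n)^2/\log n$; the true source of that exponent lies elsewhere.

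The paper's proof is organised quite differently and avoids the problem. It decomposes $g$ \emph{multiplicatively}, $g=g_1\cdots g_{N(g)}$ with each $g_i\in\mathcal{I}$ and $N(g)$ minimal, and splits the sphere of radius $n$ according to whether $N(g)\le\lambda$ or $N(g)>\lambda$, with $\lambda=n\log\log n/\log n$. In the few-factor case the count is at most $\lambda\binom{n-1}{\lambda-1}\delta(n/\lambda)^\lambda$ by the log-concavity Lemma~\ref{F concave}, and the exponent $n(\log\log n)^2/\log n$ arises as $\lambda\log(n/\lambda)$ --- from the product decomposition, not from any portrait. In the many-factor case such $g$ contract by at least $\lambda$ at the \emph{fixed} level $m$ where $\mathcal{I}_m=\mathcal{I}$, giving $\#W_\lambda^>(n)\le[G:Stab(m)]\sum_{n_1+\cdots+n_{d^m}\le n-\lambda}\gamma(n_1)\cdots\gamma(n_{d^m})$, which is closed by a strong induction $\gamma(j)\le F(j)$ together with the elementary Lemma~\ref{lemme lf}. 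To salvage your portrait approach you would at minimum have to import both the product decomposition into $\mathcal{I}$-factors (to certify the length drop $\lambda$ at a bounded level) and the induction on $n$ --- at which point you would have reconstructed the paper's argument.
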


\begin{remark} 
  The idea behind this result is the following: if $\mathcal{I}$ grows
  subexponentially, then, expressing any group element $g$ of length
  $n$ as a word in $\mathcal{I}^m$ for some $m$, either $m$ is much
  smaller than $n$, and thus the set of such words grows slowly; or
  $m$ is not negligible compared to $n$ and, in that case, $g$ behaves
  as in Proposition~\ref{propo: grigo}. This kind of argument was used
  (among other works) in~\cite{bartholdi1}. Anna Erschler has obtained
  in~\cite{erschler} some similar upper bounds.
\end{remark}

In order to prove Proposition~\ref{propo : growth}, we find useful to
state two lemmas.

\begin{lemma}\label{F concave}
  Let $F$ be a map such that $\log F$ is concave. Then, for every
  $n_1,\ldots ,n_k$,
  \begin{equation}\label{ineg concave}
    \prod_{i=1}^k F(n_i)\le F\left(\frac{\sum_{i=1}^k n_i}{k}\right)^k.
  \end{equation}
  In particular, if $F$ is subexponential, then there is a map $G\ge
  F$ such that $\log G$ is concave, and hence $G$ satisfies equation
  \eqref{ineg concave}.
\end{lemma}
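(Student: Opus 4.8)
The plan is to treat the two assertions separately: inequality \eqref{ineg concave} is a direct instance of Jensen's inequality, while the existence of a majorant $G\ge F$ with $\log G$ concave rests on forming the least concave majorant of $\log F$, whose finiteness is exactly the point where subexponentiality is used.

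For the inequality, write $\psi=\log F$. By hypothesis $\psi$ is concave, so Jensen's inequality applied to the points $n_1,\dots,n_k$ with equal weights $1/k$ gives
\[
\frac1k\sum_{i=1}^k\psi(n_i)\le\psi\!\left(\frac1k\sum_{i=1}^k n_i\right).
\]
Multiplying through by $k$ and exponentiating yields $\prod_{i=1}^k F(n_i)\le F\big(\tfrac1k\sum_{i=1}^k n_i\big)^k$, which is \eqref{ineg concave}. If $F$ is only given on the integers, I would first extend $\psi$ to $[0,\infty)$ by piecewise-linear interpolation between consecutive integer values; this preserves concavity and leaves the integer data unchanged, so that the point $\frac1k\sum_i n_i$ becomes a legitimate argument.

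For the second assertion, again set $\psi=\log F$ and let $\widehat\psi$ denote the least concave majorant of $\psi$, realized as the infimum of all affine functions lying above $\psi$,
\[
\widehat\psi(x)=\inf\{\alpha x+\beta:\ \alpha x'+\beta\ge\psi(x')\text{ for every }x'\}.
\]
Being an infimum of affine functions, $\widehat\psi$ is concave, and by construction $\widehat\psi\ge\psi$; hence $G:=e^{\widehat\psi}$ satisfies $G\ge F$ and has $\log G=\widehat\psi$ concave, so the first part applies to $G$. Everything thus reduces to checking that $\widehat\psi$ is finite, and this is where I expect the only real difficulty.

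Subexponentiality of $F$ means precisely that $\psi(x)/x\to0$. Consequently, for each slope $\alpha>0$ one has $\psi(x)-\alpha x\to-\infty$, so $\beta_\alpha:=\sup_x(\psi(x)-\alpha x)$ is finite and $x\mapsto\alpha x+\beta_\alpha$ is an affine majorant of $\psi$. The defining family is therefore nonempty, giving $\widehat\psi(x)<\infty$, while $\widehat\psi(x)\ge\psi(x)>-\infty$ bounds it below; thus $\widehat\psi$ is a genuine finite concave function. I would underline that subexponentiality enters only here and is indispensable: if $\psi$ grew linearly, no affine majorant of small slope would exist and the least concave majorant could be identically $+\infty$. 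As a bonus, the bound $\widehat\psi(x)\le\alpha x+\beta_\alpha$ valid for every $\alpha>0$ forces $\widehat\psi(x)/x\to0$, so $G$ is again subexponential, which is what the downstream growth estimate will need.
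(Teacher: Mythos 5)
Your proof is correct. The first assertion is handled exactly as in the paper: both you and the authors apply concavity of $\log F$ to the points $n_1,\dots,n_k$ with equal weights and exponentiate (your remark about interpolating between integer values is a sensible precaution the paper leaves implicit). For the second assertion your construction differs from the paper's: the authors build an explicit piecewise-linear concave majorant, choosing a sequence $\epsilon_i\downarrow 0$ and thresholds $n_i$ with $\log F(n)/n\le\epsilon_i$ for $n\ge n_i$, and gluing the affine pieces $\epsilon_i n+\delta_i$ so that the slopes decrease; you instead take the least concave majorant of $\log F$, i.e.\ the infimum of all affine majorants, and use subexponentiality only to show that affine majorants of every positive slope exist, so that this infimum is finite. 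The two constructions rest on the same fact --- that $\psi(x)/x\to 0$ supplies affine majorants of arbitrarily small slope --- but yours is the canonical envelope and avoids the bookkeeping with the $\delta_i$, while the paper's explicit version makes continuity and concavity visible by inspection. You also correctly note, as the paper does, that the resulting majorant is again subexponential, which is what the application in Proposition~\ref{propo : growth} actually needs.
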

\begin{proof}
  By hypothesis, $\sum_{i=1}^k \log F(n_i) \le k \log
  F\left(\frac{\sum_{i=1}^k n_i}{k}\right)$. Exponentiating this last
  equation, the desired inequality follows.

  Suppose now that $F$ is subexponential, that is,
  $\lim_{n\to\infty}\frac{\log F(n)}{n}=0$. Let $(\epsilon_i)_{i\ge
    1}$ be strictly decreasing to zero and $(n_i)_{i\ge 1}$ be
  strictly increasing, such that $n_1=1$ and $\frac{\log F(n)}{n}\le
  \epsilon_i$ for every $n\ge n_i$. .  Define then $\log
  G(n)=\epsilon_i n+\delta_i$ on the the interval $n_i\le n\le
  n_{i+1}$, with $\delta_1=0$ and
  $\delta_i=(\epsilon_{i-1}-\epsilon_i)n_i+\delta_{i-1}$.  Then $\log
  G\ge \log F$ is continuous and concave by definition and $\lim_{n\to
    \infty}\frac{\log G(n)}{n}=0$.
\end{proof}

\begin{lemma}\label{lemme lf}
  Consider the maps
  \begin{equation*}
    \lambda(n)=\frac{n \log\log n}{\log n}
  \end{equation*}
  and for some $d,m>0$,
  \begin{align*}
    f(n)= \frac{\log n}{n\left(\log\log n\right)^2} &+ \frac{d^m (\log n)^2}{n(\log\log n)^2} \\ &+ \frac{n-\lambda(n)}{n} \frac{\log n}{\log \left(\frac{n-\lambda(n)}{d^m}\right)} \left(\frac{\log\log\left( \frac{n-\lambda(n)}{d^m} \right)}{\log\log n}\right)^2. 
  \end{align*}
  Then, there exists an integer $N$ such that $f(n)\le 1$ for every
  $n\ge N$.
\end{lemma}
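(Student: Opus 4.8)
The plan is to pin down the precise asymptotic behaviour of $f(n)$ as $n\to\infty$ and, crucially, to check that its limit is approached from below. Throughout I would abbreviate $x=\log n$ and $y=\log\log n=\log x$, so that $\lambda(n)/n=y/x\to0$. The function $f$ is a sum of three terms, which I shall call $T_1,T_2,T_3$ in the order they appear. The first two are harmless: since $T_1=\frac{\log n}{n(\log\log n)^2}$ and $T_2=\frac{d^m(\log n)^2}{n(\log\log n)^2}$ both carry a factor $1/n$ against only powers of $\log n$ (with $d,m$ fixed), they are $O\!\bigl((\log n)^2/n\bigr)$, hence $o\!\bigl(\log\log n/\log n\bigr)$ because $(\log n)^3/(n\log\log n)\to0$. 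So the whole question reduces to the third term $T_3$, which I claim tends to $1$ but from below, at rate $\log\log n/\log n$.

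To see this I would expand $T_3$ factor by factor. Writing $n-\lambda(n)=n\bigl(1-\tfrac{y}{x}\bigr)$, the first factor is exactly $\frac{n-\lambda(n)}{n}=1-\tfrac{y}{x}$. For the second factor, using $\log(1-\epsilon)=-\epsilon+O(\epsilon^2)$ with $\epsilon=y/x$,
\[
\log\Bigl(\tfrac{n-\lambda(n)}{d^m}\Bigr)=x-m\log d+\log\Bigl(1-\tfrac{y}{x}\Bigr)=x-m\log d+O\!\bigl(\tfrac{y}{x}\bigr),
\]
so that the second factor $\frac{\log n}{\log((n-\lambda(n))/d^m)}=\dfrac{x}{x-m\log d+O(y/x)}=1+\tfrac{m\log d}{x}+O\!\bigl(\tfrac{y}{x^2}\bigr)$. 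For the third factor I would take one further logarithm, getting $\log\log\bigl((n-\lambda(n))/d^m\bigr)=\log\bigl(x-m\log d+O(y/x)\bigr)=y-\tfrac{m\log d}{x}+O\!\bigl(\tfrac{y}{x^2}\bigr)$, whence the squared ratio equals $\bigl(1-\tfrac{m\log d}{xy}+O(\tfrac{1}{x^2})\bigr)^2=1-\tfrac{2m\log d}{xy}+O\!\bigl(\tfrac{1}{x^2}\bigr)$.

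Multiplying the three factors, every correction beyond the leading $-y/x$ from the first factor is of strictly smaller order: the second factor contributes $+m\log d/x$, which is $o(y/x)$ since $y=\log\log n\to\infty$ while $m\log d$ is fixed, and the third contributes only $O(1/(xy))$; the cross terms are $O(y/x^2)=o(y/x)$ as well. Collecting everything gives
\[
T_3=1-\frac{\log\log n}{\log n}\,\bigl(1+o(1)\bigr),
\]
and therefore $f(n)=T_1+T_2+T_3=1-\frac{\log\log n}{\log n}\bigl(1+o(1)\bigr)$, since $T_1+T_2=o(\log\log n/\log n)$. As the correction is negative for all large $n$, there is an $N$ beyond which $f(n)<1$, which is the claim.

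The delicate point — and the one I would handle most carefully — is precisely the sign and relative size of the first-order corrections in $T_3$: all three factors converge to $1$, and the discarded terms $T_1,T_2$ are positive, so a careless expansion could easily hide the fact that the limit is approached from below. What settles it is that the negative contribution $-\log\log n/\log n$ from the factor $\frac{n-\lambda(n)}{n}$ dominates the positive contribution $+m\log d/\log n$ from the factor $\frac{\log n}{\log((n-\lambda(n))/d^m)}$, simply because $\log\log n\to\infty$ whereas $m\log d$ is a constant. Rather than tracking exact error constants, I would make each $O(\cdot)$ effective by fixing an explicit threshold (for instance $\log\log n\ge 2m\log d+2$) on which all of the above expansions hold with errors bounded by, say, twice their nominal size; this turns the argument into an explicit value of $N$.
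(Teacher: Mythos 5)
Your proposal is correct and rests on the same key observation as the paper's proof: the only significant term is the third one, and its deficit below $1$ is governed by the factor $\frac{n-\lambda(n)}{n}=1-\frac{\log\log n}{\log n}$, whose negative correction of order $\frac{\log\log n}{\log n}$ dominates both the positive correction $\frac{m\log d}{\log n}$ from $\frac{\log n}{\log((n-\lambda(n))/d^m)}$ and the $O\bigl((\log n)^2/n\bigr)$ contribution of the first two terms. The paper reaches the same conclusion by first discarding the $(\log\log)^2$ ratios (each being $<1$) and then algebraically rearranging and normalizing by $\frac{\log n'}{\log\log n}$ rather than by Taylor expansion, but the substance is identical.
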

\begin{proof}
  We write
  \[f(n)= \frac1{\log\log(n)^2}\cdot\frac{\log n}{n} (1+ A \log n) +
  \frac{\log\log(n')^2}{\log\log(n)^2}\cdot A\frac{n'\log n}{n\log(n')},
  \]
  with $A=d^m$ and $n'=(n-\lambda(n))/A$. Since
  $\frac1{\log\log(n)^2}<1$ and
  $\frac{\log\log(n')^2}{\log\log(n)^2}<1$ for $n$ large enough, it
  suffices to prove the stronger inequality
  \begin{equation}\label{eq lf}
    \frac{\log n}{n} (1+ A\log n) + A \frac{n'\log n}{n\log n'}<1
  \end{equation}
  for all $n$ large enough.

  Now this amounts to
  $$
  \frac{\log n}{n} (1+A\log n) <  1- \frac{\log n}{\log n'} +\frac{\log\log n}{\log n'};
  $$
  if we multiply this last inequality by $\frac{\log n'}{\log\log n}$,
  we get
  $$
  \frac{\log n\log n'}{n\log\log n}(1+A\log n) < 1-\frac{\log(n/n')}{\log\log n}.
  $$
  Then the LHS is bounded above by $(A+1)\frac{\log(n)^3}{n\log\log
    n}$, which tends to $0$ as $n\to\infty$; and
  $\frac{\log(n/n')}{\log\log n}$ also tends to $0$ as $n\to\infty$
  because $n/n'$ tends to $A$, so the RHS tends to $1$. It follows
  that~\eqref{eq lf} holds for $n$ large enough.
\end{proof}

\begin{proof}[Proof of Proposition~\ref{propo : growth}]


  We first suppose that $\mathcal{I}_k$ has subexponential growth, for
  some $k$. What will actually be used is that $\mathcal{I}$ itself
  has subexponential growth.

  Let us write every $g\in G$ as a product $g=g_1\cdots g_{N(g)}$ with
  $g_i\in \mathcal{I}$ and where $N(g)=\min \{k\,|\,g=g_1\cdots g_k,\,
  g_i\in \mathcal{I}\}$.

  For any $\lambda\le \frac{n}{2}$, the sphere of ray $n$ in $G$ is
  the union of
  $$W_\lambda^<(n) := \{ g\,|\, \ell(g)=n, N(g)\le \lambda\}\quad \text{and} \quad W_\lambda^>(n) := \{ g\,|\, \ell(g)=n, N(g)>\lambda\}.
  $$

  Let $\mathcal{I}(n_i)$ be the sphere of ray $n_i$ in $\mathcal{I}$
  and $\delta(n_i)=\#\mathcal{I}(n_i)$. Then, for any $k\le \lambda$,
  the cardinality of $\mathcal{I}^k\,\cap\, \{g\in G\,|\,\ell(g)=n\}$
  is $\sum_{n_1+\cdots +n_k=n} \;\prod_{i=1}^k \delta(n_i)$. Hence,
  \begin{align*}
    \# \,W_\lambda^<(n) & \le \sum_{k=1}^{\lambda}\; \sum_{n_1+\cdots
      +n_k=n} \;\prod_{i=1}^k \delta(n_i).
  \end{align*}
  We may suppose that $\delta(n)$ is increasing and, by Lemma~\ref{F
    concave}, satisfies equation \eqref{ineg concave}. Hence,
  \begin{align*}
    \# \,W_\lambda^<(n) &\le \sum_{k=1}^{\lambda} \; \sum_{n_1+\cdots +n_k=n} \delta\left(\frac{n}{k}\right)^k  \\
    & \le \sum_{k=1}^{\lambda}\; \sum_{\binom{n-1}{k-1}}
    \delta\left(\frac{n}{\lambda}\right)^\lambda \le \lambda
    \binom{n-1}{\lambda-1}
    \delta\left(\frac{n}{\lambda}\right)^\lambda.
  \end{align*}
  As $\binom{n}{\lambda}\le (\frac{en}{\lambda})^{\lambda} $ by
  Stirling's formula, it follows that
  \begin{equation}\label{W<}
    \#\,W_\lambda^<(n)\le e^\lambda\, \left(\frac{n}{\lambda}\right)^{\lambda-1}\,  \delta\left(\frac{n}{\lambda}\right)^\lambda.
  \end{equation}

  On the other hand, for $n$ fixed, there is an $m$ such that
  $\mathcal{I}_m(n)=\mathcal{I}(n)$. Therefore,
  \begin{align*}
    \#\,W_\lambda^>(n)&\le [G:Stab(m)]\, \sum_{n_1+\ldots +n_{d^m}\le
      n-\lambda}\gamma(n_1)\cdots \gamma(n_{d^m}).
  \end{align*}
  If $\gamma=\lim_{n\to\infty} \gamma(n)^{1/n} $ is the growth rate of
  $G$, then there is a constant $K>0$ such that $K\gamma\ge
  \gamma(n)^{1/n}$ for every $n\ge 1$.  Hence,
  \begin{align*}
    \#\,W_\lambda^>(n)&\le [G:Stab(m)]\,  \sum_{n_1+\ldots +n_{d^m}\le n-\lambda} K^{d^m}\gamma^{n-\lambda}
  \end{align*}
  and
  \begin{align}\label{W>}
    \#\,W_\lambda^>(n)&\le p(n) \gamma^{n-\lambda}
  \end{align}
  where $p(n)= [G:Stab(m)]\, K^{d^m} \binom{n-\lambda}{d^m}$ is a
  polynomial (of degree $d^m$).

  Set $\epsilon =\frac{\lambda}{n}$. From equations \eqref{W<} and
  \eqref{W>} we get
  \begin{align*}
    \gamma &\le  \lim_{n\to\infty}
    \left(\#\,W_\lambda^>(n)+\#\,W_\lambda^<(n)\right)^{1/n} \\
    &\le \max\left\lbrace \lim_{n\to\infty} \#\,W_\lambda^>(n)^{1/n}, \lim_{n\to\infty} \#\,W_\lambda^<(n)^{1/n}\right\rbrace 
     \le \max\left\lbrace \epsilon^{-\epsilon}
       \delta(\epsilon^{-1})^\epsilon,
      \gamma^{1-\epsilon }\right\rbrace .
  \end{align*}
  As $\lim_{\epsilon\to 0}\epsilon^{-\epsilon}
  \delta(\epsilon^{-1})^\epsilon=1$, obtain in all cases $\gamma=1$.


  Suppose next that $I_k$ grows linearly for some $k$. Thus, there is
  an $m (\ge k)$ such that $\mathcal{I}_m=\mathcal{I}$.  We have to
  show that there exist constants $A,B>0$ such that
  $$
  \gamma(n)\le \exp\left(A+B \frac{n(\log\log n)^2}{\log n}\right),
  $$
  for $n$ large enough.

  Consider the subexponential map $F(n)=e^{\frac{n(\log\log n)^2}{\log
      n}}$. Then, for $n\ge c:=e^{e^2}$,
  \begin{align*}\begin{split}
      \left(\log F(n) \right)''= \frac{1}{n(\log n)^3} \big( -\log n (\log\log n)^2 &+ 2\log n\log\log n \\
      &+ 2 (\log\log n)^2-6\log\log n+2 \big)\le 0
    \end{split}\end{align*}
  so that $\log F(n)$ is concave for $n\ge c$.

  Define $A=\log\gamma(N)$, where $N$ is as in Lemma~\ref{lemme lf}.
  Consider also the constants
  \begin{align*}
    M&=(d^m+1)[G:Stab(m)] \gamma\left(c\right)^{d^m} \left(\frac{e}{d^m}\right)^{d^m}\\
    \text{ and }\quad B&=\max\left\lbrace 2+\log
      \delta\left(\frac{n}{\lambda}\right),\,\log M+(d^m-1) A+\log
      2\right\rbrace .
  \end{align*}
  Define then the map
  \begin{equation*}
    F(n)=\begin{cases} &\exp\left(A+B\frac{n\left(\log\log n\right)^2}{\log n}\right) \quad\text{ if }\quad n\ge c\\
      \mbox{}\\
      & \exp\left(A+B\frac{c\left(\log\log c\right)^2}{\log c}\right)\quad \text{ if }\quad 0\le n<c,
    \end{cases}
  \end{equation*}
  so that $\gamma(k) \le F(k)$ for every $k\le N$. For $n>N$, let us
  show by induction that $\gamma(n)\le F(n)$.

  As before, since $\mathcal{I}=\mathcal{I}_m$, we have
  \begin{align*}
    \#\,W_\lambda^>(n)&\le [G:Stab(m)]\, \sum_{n_1+\ldots +n_{d^m}\le n-\lambda}\gamma(n_1)\cdots \gamma(n_{d^m}) \\
    &\le [G:Stab(m)]\, \sum_{n_1+\ldots +n_{d^m}\le
      n-\lambda}F(n_1)\cdots F(n_{d^m}).
  \end{align*}
  Developing this last sum and thanks to Lemma~\ref{F concave}, we get
  \begin{align*}
    \#\,W_\lambda^>(n) &\le [G:Stab(m)]\; \binom{n-\lambda}{d^m}
    \;\left(F\left(c\right)^{d^m}+\sum_{k=1}^{d^m}
      F\left(c\right)^{d^m-k}F\left(\frac{n-\lambda}{k}\right)^k\right).
  \end{align*}
  Hence,
  \begin{align*}
    \#\,W_\lambda^>(n) &\le (d^m+1)\; [G:Stab(m)]\;
    F\left(c\right)^{d^m}\; \binom{n-\lambda}{d^m} \;
    F\left(\frac{n-\lambda}{d^m}\right)^{d^m}.
  \end{align*}
  Thus, as $ \binom{n-\lambda}{d^m}\le
  \left(\frac{e(n-\lambda)}{d^m}\right)^{d^m}<\left(\frac{e}{d^m}\right)^{d^m}n^{d^m}$,
  we get
  \begin{equation*}
    \#\,W_\lambda^>(n)\le M\, n^{d^m}\, F\left(\frac{n-\lambda}{d^m}\right)^{d^m}.
  \end{equation*}
  Together with \eqref{W<}, this gives
  \begin{equation*}
    \gamma(n)\le \#\,W_\lambda^<(n)+\#\,W_\lambda^>(n)\le \left(\frac{n}{\lambda}\right)^{\lambda-1} \delta\left(\frac{n}{\lambda}\right)^\lambda + M n^{d^m} F\left(\frac{n-\lambda}{d^m}\right)^{d^m}.
  \end{equation*}
  For $\lambda=\frac{n\log\log n}{\log n}$, we see that
  $(\lambda-1)\log\left(\frac{n}{\lambda}\right)+\lambda\log
  \delta\left(\frac{n}{\lambda}\right)\le A-\log 2+B\frac{n(\log\log
    n)^2}{\log n}$, and hence
  $$
  \left(\frac{n}{\lambda}\right)^{\lambda-1}
  \delta\left(\frac{n}{\lambda}\right)^\lambda\le \frac12 F(n).
  $$
  It remains to verify that
  \begin{equation*}
    M n^{d^m} F\left(\frac{n-\lambda}{d^m}\right)^{d^m}\le \frac12 F(n).
  \end{equation*}
  But this is equivalent to
  \begin{align}\label{ineq}
    \frac{(\log M+(d^m-1) A+\log 2) \log n}{Bn(\log\log n)^2} &+
    \frac{d^m (\log n)^2}{Bn(\log\log n)^2} \nonumber\\ &+
    \frac{n-\lambda}{n} \frac{\log n}{\log
      \left(\frac{n-\lambda}{d^m}\right)} \left(\frac{\log\log\left(
          \frac{n-\lambda}{d^m} \right)}{\log\log n}\right)^2 \le 1.
  \end{align}
  As the left side of \eqref{ineq} is smaller than $f(n)$ by
  definition of $B$, this holds by Lemma~\ref{lemme lf}.
\end{proof}

\section{Growth of the Fabrykowski-Gupta group}
In the remainder, $G$ will denote the Fabrykowski-Gupta group, as
defined in Section~\ref{gf}.

\subsection{Proof of Theorem~\ref{growth}}
The lower bound is easily computed. Indeed, consider the morphism
$\psi\colon G'\to G'$ induced by $a\mapsto t$ and $t\mapsto t^a$,
where $G'=\left\langle t^{\pm a^i}t^{\mp a^j},\, i\not=
  j\right\rangle$. Since $\psi\left(t^{\pm a^i}t^{\mp
    a^j}\right)=\left\langle\left\langle t^{\pm a^i}t^{\mp
      a^j},1,1\right\rangle\right\rangle$, there is an injective map
$$
\left( B_G(n)\cap G'\right)^3\hookrightarrow B_G(6n)\cap G' \;;\; (g_1, g_2,g_3) \mapsto \psi(g_1)\psi(g_2)^a\psi(g_3)^{a^{2}}
$$
where $B_G(n)$ is the ball of radius $n$ in $G$. Hence, $\beta(6n) \ge \beta(n)^3$, with $\beta(n)=\#\,(B_G(n)\cap G')$. Iterating this inequality, one get $\beta(2\cdot 6^n)\ge \beta(2)^{3^n}=12^{3^n}$, so that
$$
\gamma(t)\ge \beta(t)\ge 12^{(t/2)^\frac{\log 3}{\log 6}}.
$$

On the other hand, the upper bound follows directly from the following
result and Proposition~\ref{propo : growth}.

\begin{proposition}\label{structure I}
  \begin{enumerate}
  \item If $w\notin \mathcal{I}$, then $w$ has length reduction up to
    the third level. Equivalently,
    $\mathcal{I}=\mathcal{I}_3$;\label{reduction}
  \item The growth of $\mathcal{I}$ is linear.\label{I linear} 
  \end{enumerate}
\end{proposition}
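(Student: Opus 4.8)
The plan is to reduce both parts to an explicit combinatorial description of the non-reducing words, obtained by making the section maps $w\mapsto w_p$ completely explicit on the normal form~\eqref{word in G}. First I would analyze $\mathcal I_1$. Writing $w=t_{c_1}^{\gamma_1}\cdots t_{c_n}^{\gamma_n}\tau$ and using the rewriting rule $a^{\alpha}t=t_{-\alpha}a^{\alpha}$ (equivalently $t_c=t^{a^c}$), one pushes all the $a$-letters to the right and reads off the normal form of each state $w_p$. The outcome is that the letter $t_{c_i}^{\gamma_i}$ contributes a $t^{\gamma_i}$ to $w_{c_i-1}$, an $a^{\gamma_i}$ to $w_{c_i}$, and nothing to $w_{c_i+1}$; consequently $\sum_p\ell(w_p)=\ell(w)$ unless two $t$-letters in some $w_p$ become adjacent after the intervening $a$-exponents cancel modulo $3$. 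This makes $\mathcal I_1$ explicit: $w\in\mathcal I_1$ iff, for each $p$, no two consecutive $(p{+}1)$-events in the sequence $(c_i)$ have their intervening $p$-exponents summing to $0\bmod 3$. In particular the sub-pattern $c_i=c_{i+2}=c_{i+1}-1$ is always forbidden, which already forces the step sequence $c_{i+1}-c_i\in\{1,2\}$ to be of a very restricted type.

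For Part~\ref{reduction}, since the $\mathcal I_n$ form a decreasing sequence with $\mathcal I=\bigcap_n\mathcal I_n$ and $\mathcal I_3\subseteq\mathcal I_1$, it suffices to prove that $\mathcal I_3$ is closed under the section maps, i.e.\ $w\in\mathcal I_3\Rightarrow w_p\in\mathcal I_3$ for all $p$. Such closure lets one iterate the section maps indefinitely while staying in $\mathcal I_1$: by induction every level-$d$ state of $w$ lies in $\mathcal I_3\subseteq\mathcal I_1$, so $w$ has no reduction at any level, giving $\mathcal I_3\subseteq\mathcal I$; the reverse inclusion is automatic, whence $\mathcal I=\mathcal I_3$. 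The closure itself I would get from the explicit sections of the first step: they act as a finite substitution on the patterns $(c_i,\gamma_i)$, deleting the $(p{-}1)$-events, converting the $(p{+}1)$-events into $t$-letters, and recording the accumulated $a$-shift in the new indices. The content is that imposing the $\mathcal I_1$-condition on $w$ together with all of its level-$\le 3$ sections already forces $(c_i,\gamma_i)$ into a finite list of eventually-periodic \emph{admissible} shapes that the substitution maps to admissible shapes of the same kind; hence no new cancellation can appear at any deeper level and admissibility propagates to every level.

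Finally, Part~\ref{I linear} follows by counting these admissible shapes: each determines an element of $\mathcal I$ by a bounded seed, a bounded-period index/exponent pattern, and the number of repetitions, so that $\#\{w\in\mathcal I:\ell(w)=n\}$ is bounded independently of $n$ and $\#\{w\in\mathcal I:\ell(w)\le n\}=O(n)$. Together with Proposition~\ref{propo : growth} this yields the upper bound in Theorem~\ref{growth}. I expect the main obstacle to be the case analysis in Part~\ref{reduction}: one must verify that three levels of the non-reduction condition genuinely pin down the global self-similar shape, ruling out sporadic words that avoid reduction up to level $3$ but reduce at some deeper level. This requires tracking both the indices $c_i\in\mathbb Z/3$ and the exponents $\gamma_i\in\{1,2\}$ through the substitution with some care, and it is where the specific constant $3$ enters the statement.
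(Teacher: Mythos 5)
Your plan is, in outline, exactly the paper's proof: make the first-level sections explicit on the normal form ($t_{c}^{\gamma}$ sends $t^{\gamma}$ to component $c-1$ and $a^{\gamma}$ to component $c$), characterize $\mathcal I_1$ by a forbidden-pattern condition on $(c_i,\gamma_i)$ --- your ``intervening $p$-exponents sum to $0\bmod 3$'' criterion is equivalent to the paper's two conditions, namely that $c(g)$ be a subword of $(\dots 021\,012\dots)$ and that $\gamma_{m-1}=\gamma_{m+1}$ at the turning point $m$ --- then deduce $\mathcal I=\mathcal I_3$ from closure of $\mathcal I_3$ under the section maps, and get linear growth by counting admissible shapes. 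The reduction of part (1) to closure is sound: if $w\in\mathcal I_3$ forces $w_p\in\mathcal I_3$, then every state at every level lies in $\mathcal I_1$ and $\mathcal I_3\subseteq\mathcal I$ follows.

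The gap is that the two claims everything is reduced to --- closure of $\mathcal I_3$ under sections, and finiteness (for each $n$) of the list of admissible shapes --- are stated as expectations rather than proved, and they are where the entire content of the Proposition lives (historically, this is precisely where the published proofs went wrong). To fill them the paper needs a specific apparatus that your sketch does not supply: the observation that the turning point of the exponent sequence of the section $g_i$ determines the turning point of the index sequence of the \emph{cyclically next} section $g_{i+1}$ (this shift of components between levels is exactly why one must descend three levels, and it is invisible at the level of generality of your substitution argument); a finite table of the four local exponent patterns around the turning point that are compatible with non-reduction at level $2$; and, for part (2), a separate lemma showing that an element of $\mathcal I_1$ whose exponent sequence is admissible and whose turning point satisfies $10<m<n-10$ cannot lie in $\mathcal I$. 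That last lemma is what forces the turning points of all first-level sections to sit within distance $10$ of an end of the word, which is needed to make the number of length-$n$ elements of $\mathcal I$ bounded by a constant; without it your counting only shows the spheres are controlled by ``the number of admissible shapes,'' which you have not bounded. As it stands the proposal is a correct roadmap of the paper's argument, not a proof.
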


\noindent Before we prove Proposition~\ref{structure I}, let us give some
definitions and lemmas.

\subsection{Length reduction of words}


Consider the subsets of $A^*$
$$
\mathcal{S}=\{s| s \text{ is a subword of } (\ldots 0\,2\,1\,0\,1\,2\,0\ldots )^\sigma, \text{ for } \sigma \in A\}
$$
and
$$
\partial\mathcal{S}=\{s| s \text{ is a subword of } (\ldots 1\,1\,1\,2\,2\,2\ldots )\}.
$$
Note that 
\begin{equation}\label{equiv suites}
s=(s_i)_{i=1}^n \in A\partial \mathcal{S} \text{ if and only if } \Sigma s:=\left(-\sum_{k=1}^i s_k\right)_{i=1}^n\in \mathcal{S}.   
\end{equation}
 
For sequences $c=(c_i)_{i=1}^n \in \partial \mathcal{S}$ and $\gamma=(\gamma_i)_{i=1}^n \in S$, consider the maps
\begin{align*}
m(c)&=\begin{cases} & 1 \quad\text{ if } c \text{ is a subword of } (012)^\infty\\
& k \quad\text{ if } c_{k-1}=c_{k+1}\\
& n \quad\text{ if } c \text{ is a subword of } (021)^\infty
     \end{cases}\\
\text{ and }\\
\partial m(\gamma)&=\begin{cases} & 1 \quad\text{ if } \gamma \text{ is a subword of } 2^\infty\\
& k \quad\text{ if } \gamma_{k}=1 \text{ and } \gamma_{k+1}=2\\
& n \quad\text{ if } \gamma \text{ is a subword of } 1^\infty
     \end{cases}
\end{align*}
so that, obviously, $\partial m(\gamma)=m(\Sigma \gamma)$.

Define next, for an element $g$ written as in \eqref{word in G}, the
\emph{exponent sequence} $\gamma(g)=(\gamma_i)_{i=1}^n$ and the
\emph{index sequence} $c(g)=(c_i)_{i=1}^n$ of $g$.

As $t_{\sigma(i)}=t_i^{\sigma}$ for any $i, \sigma\in A$, the following remark holds.
\begin{lemma}\label{lemme permut}
  Let $s= t_{c_1}^{\gamma_1}\cdots t_{c_n}^{\gamma_n}\tau=\pl
  s_0,s_1,s_2\pr \tau$ be any word and its first level decomposition,
  and let $\sigma\in A$. Then
$$
s_\sigma := t_{\sigma(c_1)}^{\gamma_1}\cdots t_{\sigma(c_n)}^{\gamma_n}\tau=\pl s_0,s_1,s_2\pr^\sigma \tau.
$$ 
In particular, $s$ and $s_\sigma$ have the same first level decompositions up to a permutation of the components.
\end{lemma}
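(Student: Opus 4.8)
The plan is to reduce everything to the single relation $t_{\sigma(i)}=t_i^{\sigma}$ recorded just before the statement (where $x^\sigma$ abbreviates conjugation by $a^\sigma$), together with the elementary fact that conjugation by the root rotation $a^\sigma$ is a group automorphism acting on a first-level decomposition merely by permuting its three components.

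First I would rewrite each factor of $s_\sigma$. Since $\gamma_i$ is an integer exponent and conjugation commutes with taking powers, $t_{\sigma(c_i)}^{\gamma_i}=(t_{c_i}^\sigma)^{\gamma_i}=(t_{c_i}^{\gamma_i})^\sigma$. Applying this to every factor and using that conjugation is a homomorphism,
$$
s_\sigma=(t_{c_1}^{\gamma_1})^\sigma\cdots(t_{c_n}^{\gamma_n})^\sigma\,\tau=\bigl(t_{c_1}^{\gamma_1}\cdots t_{c_n}^{\gamma_n}\bigr)^\sigma\,\tau.
$$
Writing $h:=t_{c_1}^{\gamma_1}\cdots t_{c_n}^{\gamma_n}$, so that $s=h\tau$ with $h\in Stab(1)$ and $\phi(h)=\pl s_0,s_1,s_2\pr$ (the root permutation of $s$ being carried entirely by the trailing $\tau\in A$), this reads $s_\sigma=h^\sigma\,\tau$.

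Next I would compute the first-level decomposition of $h^\sigma$. As $\phi(a^\sigma)=\pl 1,1,1\pr\,\sigma$ acts trivially on states and only rotates the root, conjugating $\phi(h)=\pl s_0,s_1,s_2\pr$ by it leaves the states fixed as a family and merely permutes their positions by $\sigma$; that is, $\phi(h^\sigma)=\pl s_0,s_1,s_2\pr^\sigma$, again an element of $Stab(1)$. Since $\tau$ is untouched by this conjugation, $\phi(s_\sigma)=\pl s_0,s_1,s_2\pr^\sigma\,\tau$, which is the claimed formula (identifying $s_\sigma$ with its image under the embedding $\phi$). The \emph{in particular} clause is then immediate, since the operation $\pl s_0,s_1,s_2\pr\mapsto\pl s_0,s_1,s_2\pr^\sigma$ only permutes the three components.

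There is no serious obstacle here: the lemma is essentially a restatement of $t_{\sigma(i)}=t_i^\sigma$. The one point that requires care is the bookkeeping of the wreath-product conventions, namely checking that conjugation by $a^\sigma$ acts on $\pl s_0,s_1,s_2\pr$ by exactly the cyclic permutation $\sigma$ used to define $\pl\,\cdot\,\pr^\sigma$, and that the root permutation $\tau$ indeed plays no role. A quick check against the generators, e.g.\ $t^a=\pl a,1,t\pr^a=\pl t,a,1\pr=t_1$, fixes the orientation of the permutation and confirms the computation.
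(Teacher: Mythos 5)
Your argument is correct and is exactly the one the paper intends: the lemma is stated as an immediate consequence of $t_{\sigma(i)}=t_i^{\sigma}$ (the paper gives no further proof), and your write-up simply makes explicit the two steps involved --- pulling the conjugation by $a^\sigma$ out of the product and observing that it permutes the first-level components while leaving $\tau$ alone. The sanity check $t^a=\pl a,1,t\pr^a=\pl t,a,1\pr=t_1$ correctly pins down the wreath-product convention, so nothing is missing.
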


Recall that $\mathcal{I}_1=\{g\in G| \sum_{i\in
  A}\ell(g_i)=\ell(g)\}$. It is characterized in the following way.

\begin{lemma}\label{mot sans red} The set $\mathcal{I}_1$ is exactly
  the set of elements $g$ that may be written as
\begin{equation*}\label{mot sans 1-red}
g= t_{c_1}^{\gamma_1}\, \cdots\, t_{c_{m-1}}^{\gamma_{m-1}}\, t_{c_{m}}^{\gamma_{m}} \, t_{c_{m+1}}^{\gamma_{m+1}}\, \cdots\, t_{c_n}^{\gamma_n}\, \tau
\end{equation*}
with $\gamma_i=\pm 1$, and $c_i\in A$ such that 
\begin{itemize}
\item[(a)] $c(g)\in \mathcal{S}$ (with, say, $m(c(g))=m$), 
\item[(b)] If $2<m<n-2$, then $\gamma_{m-1}=\gamma_{m+1}$. 
\end{itemize}
\end{lemma}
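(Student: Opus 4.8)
The plan is to compute the first-level decomposition of $g$ explicitly and read off, coordinate by coordinate, exactly when a letter ``$t^{\pm1}$'' is lost. From $t_0=\pl a,1,t\pr$, $t_1=\pl t,a,1\pr$, $t_2=\pl 1,t,a\pr$ one sees that a single factor $t_{c_i}^{\gamma_i}$ contributes $t^{\gamma_i}$ to the coordinate $c_i-1$, contributes $a^{\gamma_i}$ to the coordinate $c_i$, and contributes nothing to the coordinate $c_i+1$. Writing $g$ as in \eqref{word in G} and multiplying (all $t_{c_i}$ fix the first level), each state $g_j$ is the ordered product over $i=1,\dots,n$ of these contributions: its ``$t$-letters'' come from the indices $i$ with $c_i=j+1$ and its ``$a$-letters'' from the indices with $c_i=j$. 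Since every factor contributes exactly one $t$-letter, the total number of $t$-letters among $g_0,g_1,g_2$ equals $n=\ell(g)$, and $\sum_j\ell(g_j)<\ell(g)$ occurs precisely when, in some coordinate, two successive $t$-letters become adjacent, i.e. the $a$-exponents sitting between them sum to $0$ modulo $3$.

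I would then split this vanishing into its two possible causes, which produce conditions (a) and (b). First, if there is no $a$-letter between the two $t$-letters, then (using $c_i\neq c_{i+1}$) the index word must contain the triple $(x,x+1,x)$ with $x=j+1$; forbidding such triples everywhere amounts to saying the successive differences of $c(g)$ never read ``$1$ then $2$'', i.e. form a block of $2$'s followed by a block of $1$'s, which is exactly the defining shape of $\mathcal{S}$ and gives (a). Second, if $a$-letters are present but cancel: by Lemma~\ref{lemme permut} I may normalise the index word near the unique place $m$ where its differences switch from $2$ to $1$ to be $\dots,y+2,y+1,y,y+1,y+2,\dots$ with $c_m=y$. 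Everywhere except at this switch, successive $t$-letters of each $g_j$ are separated by a single $a$-letter, whose exponent $\pm1$ cannot vanish; at the switch, the coordinate $j=y+1$ receives exactly the two $a$-letters $a^{\gamma_{m-1}},a^{\gamma_{m+1}}$ between the two $t$-letters coming from $c_{m-2}=c_{m+2}=y+2$, and these cancel iff $\gamma_{m-1}\neq\gamma_{m+1}$. This straddling pair of $t$-letters exists only when both positions $m\pm2$ occur, which pins down the interior range of $m$ for which one must demand $\gamma_{m-1}=\gamma_{m+1}$, giving (b). (I expect the precise endpoints of that range to require a careful boundary count.)

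Finally I would assemble the equivalence. For \emph{necessity}, if $g\in\mathcal{I}_1$ then writing $g$ in a shortest form of length $n=\ell(g)$ forces $\ell(g_j)=\#\{i:c_i=j+1\}$ for every $j$ (since $\sum_j\ell(g_j)=n=\sum_j\#\{i:c_i=j+1\}$ while each term is $\le$); hence no coordinate loses a letter, so by the above (a) and (b) hold. The reverse implication is where I expect the real difficulty: (a) and (b) guarantee that each $g_j$ is \emph{written} with $\#\{i:c_i=j+1\}$ letters $t^{\pm1}$ and no \emph{first-level} cancellation, but to conclude $\sum_j\ell(g_j)=\ell(g)$ I must know that each such combinatorially reduced state word is actually of minimal length. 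This is genuinely stronger than the trivial bound $\sum_j\ell(g_j)\le\ell(g)$: a reduced state can still cancel at deeper levels (e.g. $ata^2tat$ has states of total length $2$, yet cannot be shortened below $3$, because reproducing a factor $ata$ needs two $a$'s flanking a single $t$). The main obstacle is thus to prove that combinatorially reduced normal forms are geodesic; this does not follow from the state-sum inequality alone and needs the finer analysis of how $a$-powers flank the $t$-letters — the same ingredient underlying Proposition~\ref{structure I} — presumably run as an induction on $\ell(g)$.
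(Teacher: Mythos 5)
Your combinatorial analysis coincides with the paper's and is carried out correctly: the paper likewise computes the first-level decomposition of the local subwords $t_{\sigma(0)}^{\alpha}t_{\sigma(1)}^{\beta}t_{\sigma(0)}^{\gamma}$ (failure of (a)) and $t_{\sigma(1)}^{\alpha}t_{\sigma(0)}^{\beta}t_{\sigma(1)}^{-\alpha}$ (failure of (b)), observing that in each case two letters $t^{\pm1}$ of some state coalesce, while under (a) and (b) every pair of consecutive $t$-letters of a state remains separated by a nontrivial power of $a$. Your bookkeeping of where each factor $t_{c_i}^{\gamma_i}$ deposits its $t$- and $a$-letters, your reading of condition (a) as forbidding the difference pattern ``$1$ then $2$'' in the index word, and your treatment of the necessity direction all match the paper.

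The gap is the one you flag yourself: the sufficiency direction is not finished. Having shown that the induced state words carry $k_0+k_1+k_2=n$ pairwise non-coalescing letters $t^{\pm1}$, you still owe the lower bound $\ell(g_i)\ge k_i$, i.e.\ that each state word is geodesic, because $\mathcal{I}_1$ is defined through the metric $\ell(g_i)=\min\{\ell(v)\mid v=_G g_i\}$ and not through the word length of the induced representative; as you note, a normal-form word without first-level cancellation need not be geodesic (the group has relators in normal form). A proof that ends with ``presumably run as an induction on $\ell(g)$'' is not complete. You should know, however, that the paper's own proof stops exactly where yours does: it concludes $\sum_{i\in A}\ell(g_i)=\ell(g)$ directly from the local subword computation, in effect identifying membership in $\mathcal{I}_1$ with the absence of coalescing $t$-letters in the induced state words, and never addresses geodesicity of the states. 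So you have reproduced the published argument and, on top of it, isolated the one step it leaves implicit; to complete your write-up you must either supply that step or justify reading the quantities $\ell(g_i)$ at the level of induced words, in which case the difficulty disappears.
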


\begin{proof}
 Suppose that $g$ satisfies (a) and (b). If $m=1$ or $m=n$, then $g$ is obviously in $\mathcal{I}_1$. Otherwise, $g$ contains a subword
\begin{align*}
s=t_{\sigma(1)}^\alpha t_{\sigma(0)}^\beta t_{\sigma(1)}^\gamma = &\pl t^{\alpha},a^{\alpha},1\pr^\sigma \pl a^{\beta},1,t^{\beta}\pr^\sigma
\pl t^{\gamma},a^{\gamma},1\pr^\sigma \\
=& \pl t^{\alpha}a^{\beta}t^{\gamma}, a^{\alpha}a^{\gamma}, t^{\beta}\pr^\sigma,
\end{align*}
where $\sigma\in A$.  
If $2<m<n-2$ and $\alpha =\gamma$ then $s=\pl t^{\alpha}a^{\beta}t^{\alpha}, a^{2\alpha}, t^{\beta}\pr^\sigma$ so that $\sum_{i\in A}\ell(g_i)=\ell(g)$.

Reciprocally, suppose that $g$ does not satisfy (a). Then $g$ contains a subword 
\begin{align*}
u&=t_{\sigma(0)}^{\alpha}t_{\sigma(1)}^{\beta}t_{\sigma(0)}^{\gamma}
=\pl a^{\alpha},1,t^{\alpha}\pr^\sigma\pl t^{\beta},a^{\beta},1\pr^\sigma \pl a^{\gamma},1,t^{\gamma}\pr^\sigma \\
&=\pl a^{\alpha}t^{\beta}a^{\gamma},a^{\beta},t^{\alpha-\gamma}\pr^\sigma ,
\end{align*}
We see that $\sum_{i\in A}\ell(s_i)\le\ell(s)-1<\ell(s)$, hence $\sum_{i\in A}\ell(g_i)<\ell(g)$.

Finally, suppose that $g$ does not satisfy (b), that is, $2<m<n-2$ and $g$ contains a subword
\begin{align*}
s=t_{\sigma(1)}^\alpha t_{\sigma(0)}^\beta t_{\sigma(1)}^{-\alpha} = \pl t^{\alpha}a^{\beta}t^{\alpha}, 1, t^{\beta}\pr^\sigma.
\end{align*}
Again, $\sum_{i\in A}\ell(g_i)<\ell(g)$.
\end{proof}

Let $g = t_{c_1}^{\gamma_1}\, t_{c_2}^{\gamma_2}\, \cdots\, t_{c_{m-1}}^{\gamma_{m-1}}\, t_{c_m}^{\gamma_m}\, t_{c_{m+1}}^{\gamma_{m+1}}\, \cdots\, t_{c_n}^{\gamma_n}\, \tau$ be a group element with $c(g)\in \mathcal{S}$ and $m(c(g))=m$. Developing $g$ on the first level, we get 
\begin{align*}
\pl g_0, g_1,g_2\pr = \, \cdots \,\pl 1,t^{\gamma_{m-2}}, &a^{\gamma_{m-2}}\pr^{a^{c_m}}\, \pl t^{\gamma_{m-1}},a^{\gamma_{m-1}},1\pr^{a^{c_{m}}} \, \pl a^{\gamma_m},1,t^{\gamma_m}\pr^{a^{c_m}} \\
 & \pl t^{\gamma_{m+1}},a^{\gamma_{m+1}},1\pr^{a^{c_m}} \pl 1,t^{\gamma_{m+2}},a^{\gamma_{m+2}}\pr^{a^{c_m}}\cdots.
\end{align*}
By Lemma~\ref{lemme permut}, up to a permutation of the components, we may suppose $c_m=0$. We may also suppose that $c_1=1$, as the two remaining cases behave symmetrically. Hence we get
\begin{align*}
g_{0}&= t^{\gamma_1}\,a^{\gamma_2}\,t^{\gamma_4}\,a^{\gamma_5}\cdots a^{\gamma_{m-3}}\, t^{\gamma_{m-1} }\, a^{\gamma_m}\, t^{\gamma_{m+1} }\, a^{\gamma_{m+3} }\, t^{\gamma_{m+4}}\cdots \\
&= t_0^{\gamma_1}\,t_{-\gamma_2}^{\gamma_4}\cdots   t_{*}^{\gamma_{m-4} }\, t_{*-\gamma_{m-3}}^{\gamma_{m-1} }\, t_{*-\gamma_{m-3}-\gamma_m}^{\gamma_{m+1} } \,t_{*-\gamma_{m-3}-\gamma_m-\gamma_{m+3}}^{\gamma_{m+4}} \cdots \\
g_{1}&=a^{\gamma_1}\,t^{\gamma_3}\,a^{\gamma_4}\,t^{\gamma_6}\cdots a^{\gamma_{m-4}}\, t^{\gamma_{m-2} } \,a^{\gamma_{m-1}+\gamma_{m+1}}\, t^{\gamma_{m+2} } \,a^{\gamma_{m+4} } \,t^{\gamma_{m+5}}\cdots \\
&= t_{-\gamma_1}^{\gamma_3}\, t_{-\gamma_1-\gamma_4}^{\gamma_6}\cdots  t_{*}^{\gamma_{m-5} } \,t_{*-\gamma_{m-4}}^{\gamma_{m-2} }\, t_{*-\gamma_{m-4}-(\gamma_{m-1}+\gamma_{m+1})}^{\gamma_{m+2}} \\ 
& \hspace{6cm} t_{*-\gamma_{m-4}-(\gamma_{m-1}+\gamma_{m+1})-\gamma_{m+4}}^{\gamma_{m+5}}\cdots \\
g_{2}&=t^{\gamma_2}\,a^{\gamma_3}\,t^{\gamma_5}\,a^{\gamma_6} \cdots a^{\gamma_{m-5}}\, t^{\gamma_{m-3} }\, a^{\gamma_{m-2}}\, t^{\gamma_{m} }\, a^{\gamma_{m+2} }\, t^{\gamma_{m+3}}\cdots \\
&= t_0^{\gamma_2}\,t_{-\gamma_3}^{\gamma_5}\cdots   t_{*}^{\gamma_{m-6} }\, t_{*-\gamma_{m-5}}^{\gamma_{m-3} }\, t_{*-\gamma_{m-5}-\gamma_{m-2}}^{\gamma_{m} } \,t_{*-\gamma_{m-5}-\gamma_{m-2}-\gamma_{m+2}}^{\gamma_{m+3}} \cdots. 
\end{align*}

Set $\tilde{\gamma}(g_0)=(\gamma_1,\;\gamma_4,\ldots,\gamma_{m-4},\;\gamma_{m-1}+\gamma_{m+1},\;\gamma_{m+4}, \ldots)$. If $\tilde{\gamma}(g_0), \gamma(g_1),\gamma(g_2)\in A\partial \mathcal{S}$, then, thanks to \eqref{equiv suites}, the following relations hold
\begin{align}
\partial m(\gamma(g_1))+1&= m(c(g_2)),\label{rel 1}\\
\partial m(\gamma(g_2)) +1&=m(c(g_0)),\label{rel 2}\\
 \partial m(\tilde{\gamma}(g_0))&=m(c(g_1)).\label{rel 3}
\end{align}

\begin{lemma} \label{lemma:cara I}
\begin{enumerate}
\item $g\in \mathcal{I}_n$ if and only if $g\in \mathcal{I}_{n-1} $ and $g_x\in \mathcal{I}_1$ for every $g\in A^{n-1}$;\label{lemme 0}
\item For every $x\in A^*$,
$$
\forall  i\in A \colon c(g_{xi})\in \mathcal{S}  \text{ if and only if }\forall i\in A^\times \colon \gamma(g_{xi})\in A\partial \mathcal{S} \text{ and }\tilde{\gamma}(g_{x0})\in A\partial \mathcal{S};
$$ \label{lemme 2}
\item $g\in \mathcal{I}\text{ if and only if }c(g_x)\in \mathcal{S} \text{ for every } x\in A^*$; \label{lemme 1}
\item  $g\in \mathcal{I}$ if and only if $\gamma(g_{xi})\in A\partial \mathcal{S}$ for $i\in A^\times$ and $\tilde{\gamma}(g_{x0})\in A\partial \mathcal{S}$ for every $x\in A^*$. \label{lemme 3}
\end{enumerate}
\end{lemma}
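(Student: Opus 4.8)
The four assertions are nested, and the plan is to establish them in the order stated, each from its predecessors together with the material set up above. Assertion~\ref{lemme 0} is formal; \ref{lemme 2} is the content of \eqref{rel 1}--\eqref{rel 3} read through \eqref{equiv suites}; \ref{lemme 1} marries \ref{lemme 0} and \ref{lemme 2} to Lemma~\ref{mot sans red}; and \ref{lemme 3} is a restatement of \ref{lemme 1} via \ref{lemme 2}.

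For \ref{lemme 0} I would simply unwind the recursion defining $\mathcal{I}_n$. Since $\mathcal{I}_0=G$ and $\mathcal{I}_n=\{g\in\mathcal{I}_1 : g_i\in\mathcal{I}_{n-1}\text{ for all }i\in A\}$, an induction on $n$ gives $\mathcal{I}_n=\{g : g_x\in\mathcal{I}_1\text{ for all }|x|\le n-1\}$, whence in particular $\mathcal{I}_n\subseteq\mathcal{I}_{n-1}$; assertion~\ref{lemme 0} is then just the splitting of this condition into the parts ``$|x|\le n-2$'' and ``$|x|=n-1$''.

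For \ref{lemme 2} the point is that the first-level expansion of $g$ computed above exhibits the index sequence of each child as the $\Sigma$-transform of the exponent sequence of the cyclically preceding child: up to the shift recorded in \eqref{rel 1}--\eqref{rel 3}, one has $c(g_{x2})=\Sigma\gamma(g_{x1})$, $c(g_{x0})=\Sigma\gamma(g_{x2})$ and $c(g_{x1})=\Sigma\tilde\gamma(g_{x0})$, the correction $\tilde\gamma$ at the turning point $m(c(g_x))$ recording that the two middle occurrences of $t$ in $g_{x0}$ are separated only by $a^{\gamma_m}$. Since \eqref{equiv suites} gives $\Sigma s\in\mathcal{S}$ iff $s\in A\partial\mathcal{S}$, each index-in-$\mathcal{S}$ condition is equivalent to the matching exponent-in-$A\partial\mathcal{S}$ condition, which is precisely \ref{lemme 2}.

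For \ref{lemme 1}, iterating \ref{lemme 0} gives $g\in\mathcal{I}=\bigcap_n\mathcal{I}_n$ iff $g_x\in\mathcal{I}_1$ for every $x\in A^*$, and by Lemma~\ref{mot sans red} this means that at every node both $c(g_x)\in\mathcal{S}$ (condition~(a)) and the turning-point condition~(b) hold. One implication is then immediate. For the converse I would show that imposing $c(g_{xi})\in\mathcal{S}$ at all three children of every node already forces~(b): by \ref{lemme 2} the hypothesis yields $\tilde\gamma(g_{x0})\in A\partial\mathcal{S}$, whose central entry is $\gamma_{m-1}+\gamma_{m+1}$; as the non-initial entries of an $A\partial\mathcal{S}$-sequence are nonzero, $\gamma_{m-1}+\gamma_{m+1}\ne 0$, and for exponents in $\{1,2\}$ this forces $\gamma_{m-1}=\gamma_{m+1}$. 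This is the step I expect to be the main obstacle, since it is where the bookkeeping of $\tilde\gamma$, the turning point $m$, and the correspondence \eqref{equiv suites} must be made to mesh exactly. Finally, \ref{lemme 3} follows by substituting \ref{lemme 2} into \ref{lemme 1}: grouping the nodes in \ref{lemme 1} by their parent converts ``$c(g_x)\in\mathcal{S}$ for all $x$'' into the stated exponent conditions ranging over $x\in A^*$; the only point needing separate attention is that \ref{lemme 2} translates the index conditions at non-root nodes, so I would verify that the root condition $c(g)\in\mathcal{S}$ is consistent with, or absorbed into, the normal-form hypothesis on $g$.
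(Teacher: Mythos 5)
Your proposal follows essentially the same route as the paper: (1) by unwinding the recursive definition, (2) by reading \eqref{rel 1}--\eqref{rel 3} through \eqref{equiv suites}, (3) by combining Lemma~\ref{mot sans red} with (2) (your observation that $\gamma_{m-1}+\gamma_{m+1}\ne 0$ forces $\gamma_{m-1}=\gamma_{m+1}$ is exactly the paper's argument, and you correctly use $\tilde\gamma(g_{x0})$ where the paper has a typo $\gamma(g_{x0})$), and (4) by substitution. You are in fact slightly more careful than the paper in flagging the root-level condition in (4), which the paper silently absorbs.
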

\begin{proof}
\begin{enumerate}
\item This follows from the definition.
\item Applying \eqref{equiv suites} to the exponent sequences of the components of $g_x$, equations \eqref{rel 1}-\eqref{rel 3} show that $\tilde{\gamma}(g_{x0})\in A\partial \mathcal{S}$ if and only if $c(g_{x1})\in \mathcal{S}$, that $\gamma(g_{x1})\in A\partial \mathcal{S}$ if and only if $c(g_{x2})\in \mathcal{S}$ and that $\gamma(g_{x2})\in A\partial \mathcal{S}$ if and only if $c(g_{x0})\in \mathcal{S}$.
\item If there exists $x\in A^*$ such that $c(g_x)\notin \mathcal{S}$, then $g_x\notin \mathcal{I}_1$ by Lemma~\ref{mot sans red}, so that $g\notin \mathcal{I}$.
Reciprocally, fix $x\in A^*$ and write $g_x= t_{c_1}^{\gamma_1}\,  \cdots\,  t_{c_m}^{\gamma_m}\,  \cdots\, t_{c_n}^{\gamma_n}\, \tau$, with $m(c(g_x))=m$. By hypothesis, $c(g_x)\in S$, so that (by \eqref{lemme 0}) it is enough to see that, if $2<m<n-2$, then $\gamma_{m-1}=\gamma_{m+1}$. But $c(g_{x1})\in \mathcal{S}$ so that $\gamma(g_{x0})\in A\partial \mathcal{S}$, by \eqref{lemme 2}. Therefore, $\gamma_{m-1}=\gamma_{m+1}$. 
\item This follows from \eqref{lemme 2} and \eqref{lemme 1}.\qedhere
\end{enumerate}
\end{proof}

\begin{lemma}\label{lemma:words not in I} 
Let $g= t_{c_1}^{\gamma_1}\, \cdots\, t_{c_m}^{\gamma_m}\, \cdots\, t_{c_n}^{\gamma_n}\tau $ be an element of $I_1$ of length $n$, with $m(c(g))=m$, and such that $\gamma(g)\in \partial \mathcal{S}$. Suppose moreover that $10<m<n-10$.
Then $g\notin \mathcal{I}$.
\end{lemma}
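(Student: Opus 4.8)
\emph{Strategy.} The plan is to use the characterisation of Lemma~\ref{lemma:cara I}(\ref{lemme 3}): to prove $g\notin\mathcal{I}$ it suffices to exhibit a single descendant whose (modified) exponent sequence leaves $A\partial\mathcal{S}$. I would extract this defect already from the first-level component $g_0$, whose modified exponent sequence is $\tilde\gamma(g_0)=(\gamma_1,\gamma_4,\dots,\gamma_{m-4},\,\gamma_{m-1}+\gamma_{m+1},\,\gamma_{m+4},\dots)$. Since $10<m<n-10$ we have $2<m<n-2$, so condition (b) of Lemma~\ref{mot sans red} applies and gives $\gamma_{m-1}=\gamma_{m+1}$; as $\gamma(g)\in\partial\mathcal{S}$ takes its values in $\{1,2\}$, the central entry equals $2\gamma_{m-1}\equiv-\gamma_{m-1}\pmod 3$, i.e.\ it is forced to the value \emph{opposite} to that of its monotone neighbours. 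This sign flip at the centre is the whole source of the obstruction.

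\emph{Generic case.} First I would treat the case where the transition point $p=\partial m(\gamma(g))$ of the monotone sequence $\gamma(g)$ lies far from the turning point $m$ (say $\lvert p-m\rvert$ exceeds a fixed constant). Then both neighbours of the central entry of $\tilde\gamma(g_0)$ carry the \emph{same} value $v\in\{1,2\}$, while the central entry carries $2v\neq v$, so $\tilde\gamma(g_0)$ has a strict local extremum. Its position is roughly $m/3$ (the number of $t$-letters of $g_0$ coming from before the centre): the bound $m>10$ keeps this position away from the single free first coordinate allowed in $A\partial\mathcal{S}$, while $m<n-10$ keeps it strictly below the length of $g_0$, so that the extremum genuinely has neighbours on both sides. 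Either way monotonicity of the coordinates after the first is violated, whence $\tilde\gamma(g_0)\notin A\partial\mathcal{S}$ and $g\notin\mathcal{I}$.

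\emph{Boundary case.} The remaining cases are those where $p$ lies within a bounded distance of $m$; there the flip is absorbed by the transition of $\gamma(g)$ itself and $\tilde\gamma(g_0)$ stays monotone. Only finitely many such local configurations occur (indexed by $p-m$ in a bounded range and by the residues of $m$ and $p$ modulo $3$, and up to the component symmetry of Lemma~\ref{lemme permut}). For each I would descend one level: the relations \eqref{rel 1}--\eqref{rel 3} compute the turning point of the relevant child from the transition points of the first-level exponent sequences and show that some child again satisfies the hypotheses of the lemma, with monotone exponent sequence and a turning point still strictly interior — its distances to the two ends have dropped only by a bounded factor, and the slack built into the constant $10$ keeps them positive. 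A direct inspection of these finitely many descended configurations then produces the local extremum exactly as in the generic case.

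\emph{Main obstacle.} The hard part is precisely this boundary analysis. One must keep exact track of the mod-$3$ alignment of the indices in the explicit first-level formulas for $g_0,g_1,g_2$, so that ``the entry immediately before (resp.\ after) the centre'' is correctly identified, and one must check that after descending the finitely many critical configurations the turning point neither falls onto the free first coordinate nor runs off the end of the word. This is the point at which both inequalities $m>10$ and $m<n-10$, rather than a single one-sided bound, are genuinely needed.
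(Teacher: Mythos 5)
Your strategy coincides with the paper's own proof. The generic case is exactly the paper's observation that if $\tilde{\gamma}(g_0)\notin\partial\mathcal{S}$ then $c(g_1)\notin\mathcal{S}$ and hence $g_1\notin\mathcal{I}_1$; your explanation via the flipped central entry $\gamma_{m-1}+\gamma_{m+1}=2\gamma_{m-1}\neq\gamma_{m-1}$ creating a strict local extremum is correct, and the bounds $10<m<n-10$ are used just as you say, to keep that extremum away from the free first coordinate and from the end of the word. The boundary case, however, is where the paper's proof actually lives: combining $\gamma_{m-1}=\gamma_{m+1}$ with $\tilde{\gamma}(g_0)\in\partial\mathcal{S}$ pins $\partial m(\gamma(g))$ to the six values $m+1,m+2,m+3,m-2,m-3,m-4$, and in each case the paper reads off $\partial m(\gamma(g_2))$, deduces $m(c(g_0))$ from \eqref{rel 2}, and checks that the exponents of $g_0$ flanking its turning point differ, so that $g_0$ violates condition (b) of Lemma~\ref{mot sans red} and $g_0\notin\mathcal{I}_1$. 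You assert that ``a direct inspection of these finitely many descended configurations produces the local extremum'' but do not carry it out; since this six-case computation with its mod-$3$ bookkeeping is essentially the entire content of the proof, it has to be done explicitly. Note also that no recursive appeal to the lemma is needed (contrary to your suggestion that some child ``again satisfies the hypotheses''): in every boundary configuration the defect is located directly in the first-level component $g_0$.
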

\begin{proof}
If $\gamma_{m-1}\not=\gamma_{m+1}$, then $g\notin \mathcal{I}_1$ by Lemma~\ref{mot sans red}. Also, if $\tilde{\gamma_0}(g)\notin \partial \mathcal{S}$, then $c(g_{1})\notin \mathcal{S}$, so $g_{1}\notin \mathcal{I}_1$. Suppose now that $ \gamma_{m-1}=\gamma_{m+1}$ and $\tilde{\gamma_0}(g)\in \partial \mathcal{S}$.  As $\gamma_0(g)\in\partial \mathcal{S}$ by hypothesis, we have $\partial m(\gamma(g_0))\in \{\frac{m-2}{3},\frac{m+4}{3}\}$. Thus, there are 6 remaining choices for $\partial m(\gamma(g))$:
\begin{itemize}
\item $\partial m(\gamma(g))=m+1$ or $\partial m(\gamma(g))=m+2$. In those cases, $\gamma_{m-4}=1=\gamma_{m+1}$ and $\gamma_{m+4}=2$. But $\partial m(\gamma(g_2))=\frac{m+1}{3}$, so that $m(c(g_0))=\frac{m+4}{3}$. 
\item $\partial m(\gamma(g))=m+3$. In that case, $\gamma_{m+1}=1$ and $\gamma_{m+4}=2=\gamma_{m+7}$. But $\partial m(\gamma(g_2))=\frac{m+4}{3}$, so that $m(c(g_0))=\frac{m+7}{3}$. 
\item $\partial m(\gamma(g))=m-2$ or $\partial m(\gamma (g))=m-3$. In those cases, we have $\gamma_{m-4}=1$ and $\gamma_{m-1}=2=\gamma_{m+1}$. But $\partial m(\gamma(g_2))=\frac{m-2}{3}$, so that $m(c(g_0))=\frac{m+1}{3}$. 
\item $\partial m(\gamma(g))=m-4$. In that case, $\gamma_{m-7}=1=\gamma_{m-4}$ and $\gamma_{m-1}=2$. But $\partial m(\gamma(g_2))=\frac{m-5}{3}$, so that $m(c(g_0))=\frac{m-2}{3}$.
\end{itemize}
In any of those cases, using Lemma~\ref{mot sans red}, we see that $g_0\notin \mathcal{I}_1$, so that $g$ does not belong to $\mathcal{I}$.
\end{proof}

\subsection{Proof of Proposition~\ref{structure I}}

\begin{enumerate}
\item \label{I=I3} Let $g= t_{c_1}^{\gamma_1}\, \cdots\, t_{c_m}^{\gamma_m}\, \cdots\, t_{c_n}^{\gamma_n}\tau \in \mathcal{I}_3$, with $m(c(g))=m$. 
For every $i\in A$, we know by hypothesis that $\gamma(g_i),\gamma(g_{ij})\in A\partial \mathcal{S}$ for $j\not=0$ and $\tilde{\gamma}(g_0),\tilde{\gamma}(g_{i0})\in A\partial \mathcal{S}$.

By Lemma~\ref{lemma:cara I} \eqref{lemme 3}, all we have to show is that $\gamma(g_{xi})\in A\partial \mathcal{S}  \text{ for } i\in A^\times \text{ and }\tilde{\gamma}(g_{x0})\in A\partial \mathcal{S}, \text{ for every } x\in  A^2 A^*$. Now,

\begin{itemize}
\item[-] For $j\in A^\times$, as $\gamma(g_{0j})\in A\partial \mathcal{S}$, the index sequence $\gamma(g_0)$ is of one of the following types

\vspace{.2cm}
\begin{center}
\begin{tabular}{cccccccc}
$\ldots$ &$\gamma_{m-7}$ & $\gamma_{m-4}$ & $\gamma_{m-1}$ & $\gamma_{m+1}$ & $\gamma_{m+4}$ & $\gamma_{m+7}$ &$\ldots$ \\[.2cm]
$\ldots$ &1&2&1&1&2&2&$\ldots$ \\
$\ldots$ &1&1&1&1&2&2& $\ldots$ \\
$\ldots$ &1&1&2&2&2&2& $\ldots$ \\
$\ldots$ &1&1&2&2&1&2& \ldots
  \end{tabular}\end{center}

\vspace{.2cm}

which means that 
\begin{equation}
\partial m\left(\tilde{\gamma}(g_0)\right) \in \left\lbrace  \frac{m-5}{3},\;\frac{m-2}{3},\;\frac{m+1}{3},\; \frac{m+4}{3}\right\rbrace.\label{m0}
\end{equation}
In any of those cases, note that we also have $\gamma(g_{00})\in A\partial\mathcal{S}$.
Altogether, this implies that 
$
\gamma(g_{0y})\in A\partial \mathcal{S}
$

for every $y\in A^*$. Hence, $\gamma(g_{0xi})\in A\partial \mathcal{S}$ for $i\in A^\times$ and $\tilde{\gamma}(g_{0x0})\in A\partial \mathcal{S}$ for every $x\in A^*$. 

\item[-] For $i\in A^\times$, since $\gamma(g_i)\in A\partial \mathcal{S}$, we have $\gamma(g_{iy})\in A\partial \mathcal{S}$ for every $y\in A^*$. Hence,
$\gamma(g_{ixj})\in A\partial \mathcal{S}$ for $j\in A^\times$ and $\tilde{\gamma}(g_{ix0})\in A\partial \mathcal{S}$ for every $x\in A^*$. 

Moreover, $\gamma(g_{ij})\in A\partial \mathcal{S}$ for $i\in A^\times$ and $j\in A$ implies that 
\begin{align}
m(c(g_1))&\in\{\partial m(\gamma(g_1))- 1,\; \partial m(\gamma(g_1))\pm 2,\; \partial m(\gamma(g_1))\pm 3, \partial m(\gamma(g_1))+4\},\label{m1}\\
m(c(g_2))&\in\{\partial m(\gamma(g_2))-1,\; \partial m(\gamma(g_2))\pm 2,\; \partial m(\gamma(g_2))\pm 3,\partial m(\gamma(g_1))+4\}.\label{m2}
\end{align}
\end{itemize}

Using relations \eqref{rel 1}-\eqref{rel 3} and \eqref{m0},\eqref{m1} and \eqref{m2}, we see that, given one of $m(c(g))$, $m(c(g_0))$, $m(c(g_1))$ or $m(c(g_2))$, the number of possibilities of choosing the three others (so that $g$ remains in $\mathcal{I}$) is bounded by a constant (independently of the length of $g$). 

\item We have to show that $\delta(n)=\#\,\mathcal{I}(n)$ is bounded by a constant (independent of $n$). But
$$
\delta(n)\le K\; \#\{\text{ possible choices of } m(c(g)) \} \;\#\{\text{ possible choices of } m(c(g_i)), i\in A \}.
$$
Let $g\in \mathcal{I}$. For $i\in A^\times$, we know by Lemma~\ref{lemma:cara I} \eqref{lemme 3} that $\gamma(g_{i}) \in A\partial\mathcal{S}$. 
Hence, by Lemma~\ref{lemma:words not in I}, we have $m(c(g_i))\le 10$ or $m(c(g_i))\ge n-10$. 
Therefore, there is at most 20 choices for $m(c(g_i))$ (to be chosen between 1 and $n$). Now, the last assertion in the proof of \eqref{I=I3} insures that the remaining choices of $m$ and $m(c(g_0))$ are bounded by a constant. \qed
\end{enumerate}

\bibliographystyle{alpha}
\bibliography{gupta}

\begin{thebibliography}{LMR95}

\bibitem[Bar03]{bartholdi1}
Laurent Bartholdi.
\newblock A {W}ilson group of non-uniformly exponential growth.
\newblock {\em C. R. Math. Acad. Sci. Paris}, 336(7):549--554, 2003.

\bibitem[BG02]{bartho-grigo}
Laurent Bartholdi and Rostislav~I. Grigorchuk.
\newblock On parabolic subgroups and {H}ecke algebras of some fractal groups.
\newblock {\em Serdica Math. J.}, 28(1):47--90, 2002.

\bibitem[BP06]{bux}
Kai-Uwe Bux and Rodrigo P\'erez.
\newblock On the growth of iterated monodromy groups.
\newblock {\em Contemp. Math.}, 394, 2006.

\bibitem[Ers04]{erschler}
Anna Erschler.
\newblock Boundary behavior for groups of subexponential growth.
\newblock {\em Ann. of Math. (2)}, 160(3):1183--1210, 2004.

\bibitem[FG85]{gupta-fabrykowski1}
Jacek Fabrykowski and Narain Gupta.
\newblock On groups with sub-exponential growth functions.
\newblock {\em J. Indian Math. Soc. (N.S.)}, 49(3-4):249--256 (1987), 1985.

\bibitem[FG91]{gupta-fabrykowski2}
Jacek Fabrykowski and Narain Gupta.
\newblock On groups with sub-exponential growth functions. {II}.
\newblock {\em J. Indian Math. Soc. (N.S.)}, 56(1-4):217--228, 1991.

\bibitem[Gri83]{grigorchuk-1}
Rostislav~I. Grigorchuk.
\newblock On the {M}ilnor problem of group growth.
\newblock {\em Dokl. Akad. Nauk SSSR}, 271(1):30--33, 1983.

\bibitem[Gri84]{grigorchuk}
R.~I. Grigorchuk.
\newblock Degrees of growth of finitely generated groups and the theory of
  invariant means.
\newblock {\em Izv. Akad. Nauk SSSR Ser. Mat.}, 48(5):939--985, 1984.

\bibitem[LMR95]{rhemtulla}
P.~Longobardi, M.~Maj, and A.~H. Rhemtulla.
\newblock Groups with no free subsemigroups.
\newblock {\em Trans. Amer. Math. Soc.}, 347(4):1419--1427, 1995.

\bibitem[OS96]{olshanskii}
A.~Yu. Ol'shanskii and A.~Storozhev.
\newblock A group variety defined by a semigroup law.
\newblock {\em J. Austral. Math. Soc. Ser. A}, 60(2):255--259, 1996.

\end{thebibliography}

\end{document}